\documentclass[10pt]{amsart}
\addtolength\headheight{4pt}
\usepackage{amssymb}
\usepackage{graphicx}
\usepackage{float}
\usepackage[all,cmtip]{xy}
\setlength{\oddsidemargin}{5pt} \setlength{\evensidemargin}{5pt}
\setlength{\textwidth}{440pt}
\setlength{\topmargin}{-50pt}
\setlength{\textheight}{24cm}

\parindent      0mm
\parskip        2mm

\newcommand{\s}{\mathfrak{s}}

\renewcommand{\O}{\mathcal{O}}

\newcommand{\D}{\mathbb{D}}

\renewcommand{\P}{\mathbb{P}}

\newcommand{\SA}{{\mathcal{A}}}

\newcommand{\SC}{{\mathcal{C}}}
\newcommand{\SD}{{\mathcal{D}}}

\newcommand{\SK}{{\mathcal{K}}}

\renewcommand{\SS}{{\mathcal{S}}}

\newcommand{\SU}{{\mathcal{U}}}

\newcommand{\J}{\mathfrak{j}}

\newcommand{\T}{\mathbb{T}}

\newcommand{\Z}{\mathbb{Z}}
\newcommand{\C}{\mathbb{C}}

\newcommand{\N}{\mathbb{N}}
\newcommand{\R}{\mathbb{R}}
\newcommand{\V}{\mathbb{V}}

\newcommand{\CP}{\mathbb{CP}}
\renewcommand{\S}{\mathbb{S}}
\renewcommand{\SC}{\mathcal{C}}
\renewcommand{\SA}{\mathcal{A}}
\renewcommand{\SD}{\mathcal{D}}

\newcommand{\Gr}{\operatorname{Gr}}

\newcommand{\im}{\operatorname{im}}

\newcommand{\End}{\operatorname{End}}

\newcommand{\Ham}{\operatorname{Ham}}
\newcommand{\codim}{\operatorname{codim}}
\newcommand{\Cont}{\operatorname{Cont}}
\newcommand{\Diff}{\operatorname{Diff}}
\newcommand{\Symp}{\operatorname{Symp}}


\newtheorem{proposition}{Proposition}
\newtheorem{theorem}[proposition]{Theorem}
\newtheorem{definition}[proposition]{Definition}
\newtheorem{lemma}[proposition]{Lemma}

\newtheorem{corollary}[proposition]{Corollary}

\theoremstyle{definition}
\newtheorem{remark}[proposition]{Remark}

\begin{document}

\title{Higher Maslov Indices}

\subjclass[2010]{Primary: 53D10.}

\keywords{Maslov index, contactomorphism, symplectomorphism}

\date{\today} 

\thanks{The work is partially supported by NSF grant DMS-1308501
  and by the Severo Ochoa Excellence Program}

\author{Roger Casals}
\address{Massachusetts Institute of Technology, Department of
  Mathematics, 77 Massachusetts Avenue Cambridge, MA 02139, USA}
\email{casals@mit.edu}

\author{Viktor Ginzburg}
\address{University of California Santa Cruz, California, United States}
\email{ginzburg@ucsc.edu}

\author{Francisco Presas}
\address{Instituto de Ciencias Matem\'aticas CSIC--UAM--UCM--UC3M,
C. Nicol\'as Cabrera, 13--15, 28049, Madrid, Spain}
\email{fpresas@icmat.es}



\begin{abstract}
  We define Maslov--type indices associated to contact and symplectic
  transformation groups. There are two such families of indices. The
  first class of indices are maps from the homotopy groups of the
  contactomorphism or symplectomorphism group to a quotient of
  $\Z$. These are based on a generalization of the Maslov index. The
  second class of indices are maps from the homotopy groups of the
  space of contact structures or the space of cohomologous symplectic
  forms to the homotopy groups of a simple homogeneous space. We
  provide a detailed construction and describe some properties of
  these indices and their applications.
\end{abstract}
\maketitle
\section{Introduction}\label{sec:intro}
In 1965, V.P. Maslov \cite[Chapitre II.2]{Ma} introduced an
integer--valued index to obtain asymptotic formulas for solutions
of partial differential equations with a small--parameter
$\hslash\to0$ in the leading term. This index allows one to extend the
Bohr--Sommerfeld quantization conditions appearing in the WKBJ method
beyond the caustic singularities, and also generalize the Morse index
for focal points in the calculus of variations. The resulting integer
index relations imply, at the first order, the quantization condition for
the quasi--classical solutions of the Schr\"odinger equation and, for
completely integrable systems, the Maslov quantization yields the exact
spectrum. As a consequence, the fiberwise Maslov quantization is
sufficient to quantize any integrable system \cite{Ar};
see also \cite{AB,DH}.

V.I. Arnol'd \cite{Ar} reinterpreted the Maslov index in terms of the
characteristic classes of a vector bundle and emphasized the role of
the Lagrangian submanifold given by the (evolution of the) initial
condition. The Maslov index is then obtained by counting the index of
the appropriate quadratic form associated to the projection onto the
configuration space. This index has become central to contact and
symplectic topology since the work of C.C. Conley and E. Zehnder
\cite{CZ} and its appearance in the Floer--type theories
\cite{Fl,EGH}. The aim of this article is to introduce two families of
indices, study their basic properties and provide some
applications. Their construction belongs to algebraic and symplectic
topology, and it is expected that applications analogous to those of
the Maslov index in asymptotic analysis and quasi--classical physics
can be found.

The first part of the article concerns the indices generalizing the
Maslov index for a path of symplectic transformations. For a
symplectic manifold $(V,\omega)$ satisfying certain hypotheses, these
are well--defined maps
$$\mu_k:\pi_{2k-1}(\Symp(V,\omega))\longrightarrow\Z/ p\Z,$$
and the map $\mu_1$ coincides with the Maslov index. These indices are
also defined for a contact manifold $(M,\xi)$. In this case, under
suitable hypothesis on the contact structure, the indices are
well--defined maps
$$\mu_k:\pi_{2k-1}(\Cont(M,\xi))\longrightarrow\Z/ p\Z.$$
The value of the integer $p=1, \dots, \infty$ depends on a
characteristic number of the manifold. In the second part of the
article we introduce a second family of indices related to the space
of compatible almost complex structures; these indices in contact
topology are known to be useful to detect non--trivial elements in the
contactomorphism group, and we refer to \cite{CP} for an
application. They are referred to as homogeneous indices and their
construction is based on the evaluation of a family of contact or
symplectic structures at a point. These homogeneous indices are also
defined in the symplectic case.

The third part of the article consists of some explicit examples and concrete applications of the above mentioned families of indices. For instance, we prove the following two results
\begin{theorem}\label{thm:sph}
The map $\pi_j(\iota):\pi_j(U(n+1))\longrightarrow\pi_j(\Cont(\S^{2n+1},\xi_0))$ is injective for $j\leq 2n+1$.
\end{theorem}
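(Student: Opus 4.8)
The plan is to detect the image of $\pi_j(\iota)$ using the higher Maslov indices $\mu_k$ constructed in the first part of the article, after reducing the problem to odd degrees. The stabilization map $\pi_j(U(n+1))\to\pi_j(U)$ is an isomorphism precisely in the stated range $j\leq 2n+1$, and by Bott periodicity $\pi_j(U)\iso\Z$ for $j$ odd while $\pi_j(U)=0$ for $j$ even. Thus in even degrees the source is trivial and injectivity is automatic, so it suffices to treat $j=2k-1$ with $k\leq n+1$, where one must show that the homomorphism
$$\pi_{2k-1}(\iota)\colon\Z\longrightarrow\pi_{2k-1}(\Cont(\S^{2n+1},\xi_0))$$
has trivial kernel.

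For each such $k$ I would compose with the index to obtain $\mu_k\circ\pi_{2k-1}(\iota)\colon\pi_{2k-1}(U(n+1))\iso\Z\to\Z/p\Z$, and the first point to settle is that $p=\infty$ for the standard sphere. This is where the hypothesis on the contact structure enters: the characteristic number governing $p$ is controlled by the Chern classes of $\xi_0$, and the stable triviality $\xi_0\oplus\underline{\C}\iso\underline{\C}^{\,n+1}$ forces $c_i(\xi_0)=0$ for all $i$, so the defining quotient is all of $\Z$. Hence $\mu_k$ is genuinely $\Z$-valued here, which is what makes detection of an infinite cyclic group possible; a finite target could never witness injectivity out of $\Z$.

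The heart of the argument is then the computation of the integer $\mu_k(\pi_{2k-1}(\iota)(\gamma_k))$ on a generator $\gamma_k$ of $\pi_{2k-1}(U(n+1))$. Here I would use the naturality of the construction of $\mu_k$ with respect to the linearized action of $U(n+1)$ on the contact distribution: at each point $z\in\S^{2n+1}$ an element $A\in U(n+1)$ maps $\xi_{0,z}=(\C z)^{\perp}$ isomorphically onto $\xi_{0,Az}$ by restriction, so the family of linearizations associated to a sphere $\S^{2k-1}\to U(n+1)$ is, up to homotopy, exactly the family of unitary bundle maps used to define the Chern character on $\pi_{2k-1}(U)$. Since $\mu_k$ is modeled on the $k$-th component of this characteristic class --- with $\mu_1$ the classical Maslov index, i.e. the winding of the determinant --- the composite $\mu_k\circ\pi_{2k-1}(\iota)$ equals multiplication by $\pm 1$, or at worst by a fixed nonzero integer arising from a universal normalization constant. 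An infinite-order image on the generator immediately gives $\ker\pi_{2k-1}(\iota)=0$.

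The step I expect to be the genuine obstacle is this last identification: one must pin down the normalization of $\mu_k$ precisely enough to guarantee that the universal constant relating it to $c_k$ (equivalently $\mathrm{ch}_k$) is nonzero, rather than merely that $\mu_k\circ\pi_{2k-1}(\iota)$ is \emph{some} homomorphism $\Z\to\Z$. Equivalently, the work lies in verifying that the passage from the honest $U(n+1)$-action to the bundle-automorphism data defining $\mu_k$ loses no information in degrees $\leq 2n+1$, so that the generator is not collapsed. Confirming $p=\infty$ is comparatively routine given the vanishing of the Chern classes of $\xi_0$, but the compatibility of the two normalizations, Maslov against Chern, is where a careful computation rather than a formal argument is required.
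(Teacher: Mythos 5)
Your outline follows the same route as the paper --- reduce to odd degrees $j=2k-1$ via Bott periodicity (Lemma \ref{lem:htpyunitary}), observe that the relevant indices are $\Z$--valued on $(\S^{2n+1},\xi_0)$ because $\xi_0\oplus\underline{\C}\cong\underline{\C}^{\,n+1}$ kills all Chern classes, and then detect injectivity through $\mu_k\circ\pi_{2k-1}(\iota)$ --- but the step you defer is not a normalization check; it is the entire content of the proof, and no naturality argument can substitute for it. Here is why the gap is genuine: for $k\leq n$ one has $\pi_{2k-1}(\S^{2n+1})=0$, so every class in the image of $\pi_{2k-1}(\iota)$ lies in $\ker(\pi_{2k-1}(ev_p))$, and the paper's Corollary \ref{cor:cohom} shows that in the symplectic setting with $c_k=0$ the A--type index vanishes \emph{identically} on precisely this kernel. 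Thus ``$\mu_k$ is modeled on $c_k$'' together with ``all Chern classes of $\xi_0$ vanish'' is formally compatible with the composite $\mu_k\circ\pi_{2k-1}(\iota)$ being the zero map; the nonvanishing in the contact case is a geometric fact about how the unitary action rotates the hyperplane field as it moves the point $p$, and it has to be computed, not inferred.

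The paper's computation is short once linearity is exploited, and it is what you would need to add. Fix the explicit unitary frame $\lambda$ on $\C^{n+1}\setminus\{0\}$; since each $s(\sigma)\in U(n+1)$ is linear, it is its own differential, so the pushed-forward frames $s(\sigma)_*\lambda$ are literally unitary matrices written in the frame $\lambda$. The frame homotopy of Lemma \ref{lem:htpyframe} then becomes a deformation of matrices to the block form $\left(\begin{smallmatrix} I_{n+1-k} & A_1\\ 0 & A_2\end{smallmatrix}\right)$, and the section $v$ whose degree defines $\mu_k$ is a column of $(A_1|A_2)^t$. This identifies $\mu_k\circ\pi_{2k-1}(\iota)$ with the composite of maps in Proposition \ref{prop:htpyU(n)} coming from the fibrations $U(m-1)\to U(m)\to\S^{2m-1}$, which is injective with image $(k-1)!\,\Z\subset\pi_{2k-1}(\S^{2k-1})\cong\Z$; so your ``universal constant'' is $\pm(k-1)!$, nonzero but not $\pm1$ in general. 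Two smaller points you would also need to address: the contact index of Definition \ref{def:contMaslov} uses frames of $\xi_0$, which has complex rank $n$, so $\mu_{n+1}$ is not literally defined and the top degree $j=2n+1$ requires either working with the ambient bundle $T\C^{n+1}|_{\S^{2n+1}}\cong\xi_0\oplus\underline{\C}$ (as the paper implicitly does) or noting that $ev_p\circ\iota$ is the fibration $U(n+1)\to\S^{2n+1}$, which is already injective on $\pi_{2n+1}$ with image $n!\,\Z$; and, as you correctly say, once the constant is known to be nonzero, torsion-freeness of $\Z$ finishes the argument.
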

This theorem recovers and generalizes the non--triviality of the Reeb
flow associated to $(\S^{2n+1},\xi_0)$ studied in \cite{CP}. Its
symplectic counterpart is as follows.
\begin{theorem}\label{thm:proj}
The map $\pi_j(\iota):\pi_j(\P U(n+1))\longrightarrow\pi_j(\Symp(\CP^n,\omega_{FS}))$ is injective for $j\leq 2n+1$.
\end{theorem}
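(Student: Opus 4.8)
The plan is to deduce this statement from Theorem~\ref{thm:sph} using the prequantization (Boothby--Wang) description of $(\S^{2n+1},\xi_0)$ over $(\CP^n,\omega_{FS})$. Write $\alpha_0$ for the standard contact form with $\ker\alpha_0=\xi_0$, so that its Reeb flow is the Hopf $S^1$--action, $d\alpha_0=\pi^*\omega_{FS}$, and the projection is the Hopf fibration $S^1\to\S^{2n+1}\xrightarrow{\pi}\CP^n$. The group $U(n+1)$ acts on $\S^{2n+1}$ by strict contactomorphisms commuting with the Hopf action, so the inclusion $\iota$ factors through the group $\operatorname{Quant}(\S^{2n+1},\alpha_0)$ of strict, $S^1$--equivariant contactomorphisms, each of which descends to a symplectomorphism of $(\CP^n,\omega_{FS})$. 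This produces the descent homomorphism $\Phi\colon\operatorname{Quant}(\S^{2n+1},\alpha_0)\to\Symp(\CP^n,\omega_{FS})$.

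First I would record the two relevant circle extensions. The center of $U(n+1)$ is the scalar circle, which is exactly the Hopf/Reeb $S^1$, and dividing by it gives the principal bundle $S^1\to U(n+1)\to\P U(n+1)$. On the other side, prequantization identifies $\ker\Phi$ with the Reeb circle and, since $H^1(\CP^n;\R)=0$ forces $\Symp_0(\CP^n,\omega_{FS})=\Ham(\CP^n,\omega_{FS})$, shows that $\Phi$ surjects onto the identity component. Thus the two sequences
\[ S^1\to U(n+1)\xrightarrow{q}\P U(n+1),\qquad S^1\to\operatorname{Quant}(\S^{2n+1},\alpha_0)\xrightarrow{\Phi}\Ham(\CP^n,\omega_{FS}) \]
are central $S^1$--extensions, and they fit into a map of fibrations in which the circle fibers are identified by the identity and the induced map of bases is precisely $\iota\colon\P U(n+1)\to\Ham(\CP^n,\omega_{FS})\subset\Symp(\CP^n,\omega_{FS})$, because $\Phi$ restricted to the image of $U(n+1)$ is $\iota\circ q$.

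Next I would pass to the two long exact sequences of homotopy groups and compare them through the resulting commutative ladder. The key input is that $\iota_*\colon\pi_j(U(n+1))\to\pi_j(\operatorname{Quant}(\S^{2n+1},\alpha_0))$ is injective for $j\le 2n+1$: this map is a factor of $\pi_j(\iota)\colon\pi_j(U(n+1))\to\pi_j(\Cont(\S^{2n+1},\xi_0))$ through the inclusion $\operatorname{Quant}\hookrightarrow\Cont$, and the composite is injective by Theorem~\ref{thm:sph}. Since the map on circle fibers is an isomorphism in every degree, the four lemma (monomorphism version) applied at degree $j$ then yields that $\pi_j(\iota)\colon\pi_j(\P U(n+1))\to\pi_j(\Ham(\CP^n,\omega_{FS}))=\pi_j(\Symp(\CP^n,\omega_{FS}))$ is injective for all $1\le j\le 2n+1$, which is the assertion. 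The diagram chase uses only exactness of pointed sequences, so it remains valid at $j=1$.

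The main obstacle is verifying that $\Phi$ is a genuine (Serre) fibration of the relevant infinite--dimensional groups with fiber the Reeb circle, that is, that the prequantization lift of a Hamiltonian isotopy can be performed continuously in families so that $\Phi$ admits local sections. This is where the Boothby--Wang/Kostant correspondence and Banyaga's structural results on $\Ham$ enter, and where one must also confirm $\Symp_0(\CP^n)=\Ham(\CP^n)$ and that $\P U(n+1)$ lies in this identity component. Once the extension $S^1\to\operatorname{Quant}\to\Ham$ is known to be a fibration, the homotopy--theoretic comparison above is routine.
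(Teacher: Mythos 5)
Your proposal is correct, but it takes a genuinely different route from the paper. The paper proves Theorem \ref{thm:proj} by the \emph{same direct method} as Theorem \ref{thm:sph}: one evaluates the higher Maslov indices on the images of the generators of $\pi_{2k-1}(\P U(n+1))$ in $\Symp(\CP^n,\omega_{FS})$, the only change being that $c_k(\CP^n)\neq 0$ forces the use of the B--type indices, so that nonvanishing is established modulo the minimal Chern number $N_k$ (this is exactly what the Remark following the proof records). You instead \emph{deduce} Theorem \ref{thm:proj} from Theorem \ref{thm:sph}: the Boothby--Wang picture gives a map of central $S^1$--extensions from $S^1\to U(n+1)\to \P U(n+1)$ to $S^1\to\operatorname{Quant}(\S^{2n+1},\alpha_0)\to\Ham(\CP^n,\omega_{FS})$, the factorization $U(n+1)\to\operatorname{Quant}\to\Cont(\S^{2n+1},\xi_0)$ together with Theorem \ref{thm:sph} makes the middle vertical map injective on $\pi_j$ for $j\leq 2n+1$, and the four lemma applied to the ladder of homotopy exact sequences (using $H^1(\CP^n;\R)=0$, hence $\Symp_0=\Ham$, and $\pi_0(S^1)=0$ at the bottom degree) yields injectivity of $\pi_j(\iota)$ in exactly the same range. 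The bookkeeping in your ladder is right, including the identification of the scalar circle of $U(n+1)$ with the Reeb circle. What each approach buys: yours is a clean formal reduction that avoids re-running the index machinery on $\CP^n$, but it rests on the nontrivial analytic input, which you correctly flag, that $\operatorname{Quant}\to\Ham$ is a locally trivial $S^1$--fibration (this is classical, going back to Kostant and Souriau, with local sections built from canonically normalized Hamiltonian isotopies in a Weinstein chart, and it is the same correspondence exploited in \cite{CP}); the paper's route stays entirely inside its own construction and yields strictly more information, namely the explicit nonvanishing $\mu_k\neq 0 \pmod{N_k}$ of the higher indices on $\CP^n$, which is of independent interest beyond the injectivity statement.
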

The fact that the map is an inclusion in rational homotopy has been proven in \cite{Re} with the use of symplectic characteristic classes. The proof we provide is simpler and emphasizes the topological (as opposed to symplectic) character of the result. There is also an application to small dual varieties, Proposition \ref{prop:lefschetz}, and to the space of contact structures isotopic to the unique tight contact manifold $(\S^1\times\S^2,\xi_{st})=\partial(\S^1\times D^3,\lambda_0)$ in Proposition \ref{prop:S1S2} and Corollary \ref{cor:S1S2}.

The article is organized as follows. Section \ref{sec:maslov}
introduces the higher Maslov indices in the contact and symplectic
case. An alternative description is given in Subsection
\ref{ssec:algebraic}. Section \ref{sec:homogeneous} details the
construction of the homogeneous indices. Section \ref{sec:app}
contains applications of both families of indices. In particular, we
establish Theorems \ref{thm:sph} and \ref{thm:proj} among other
results.

{\bf Acknowledgements}: We are grateful to Ignasi Mundet and Old\v{r}ich Sp\'{a}\v{c}il for valuable discussions.

\section{Maslov Indices}\label{sec:maslov}

In this Section we introduce the higher--rank Maslov indices. They can
be defined simultaneously for contact and symplectic transformations
since the main ingredient is a symplectic vector bundle with a certain
trivialization along a subset of the base manifold. For the sake of
simplicity, we consider first the symplectic symplectic case. The
definition for contact manifolds is then immediate.
\subsection{Indices on the group of symplectomorphisms}\label{ssec:sympmaslov}

Let $(V,\omega)$ be a symplectic manifold with $c_k(V,\omega)=0$ and let
$G=\Symp(V,\omega)$ be the connected component of the identity in the
group of symplectomorphisms of $V$. We will construct a map
$$\mu_k:\pi_{2k-1}(G)\longrightarrow\Z.$$
The vanishing of the $k$--th Chern class can be avoided; this will be
explained later. Let us spell out the construction of $\mu_k$.

We can fix a unitary frame $\lambda=\{e_1,\ldots,e_{n-k+1}\}$ of the
symplectic bundle $TV$ along the $2k$--skeleton $V^{(2k)}$ and a point
$p\in V^{(2k)}$. This frame exists because the unitary group $U(n)$ is
a maximal compact subgroup of the symplectic group $Sp(2n,\R)$ and we
have assumed $c_k(TV,\omega)=0$. (See Problem 14-C in \cite{MiS}.)

Consider a representative $s:\S^{2k-1}\longrightarrow G$ of a homotopy
class $[s]\in\pi_{2k-1}(G)$ and the evaluation map
$ev_p:G\longrightarrow V$ defined as $ev_p(\varphi)=\varphi(p)$. The
image $S$ of the map $ev_p\circ s:\S^{2k-1}\longrightarrow V$ is a
$(2k-1)$--sphere which is assumed to lie in $V^{(2k)}$. This can
always be achieved after a homotopy of $ev_p\circ s$. The bundle $TV$
is trivial along $S$, a trivialization can be obtained as
follows. Extend the frame $\lambda(p)$ at $p$ to
$\Lambda(p)=\lambda(p)\cup\{e_{n-k+2}(p),\ldots,e_n(p)\}$ and just
define $f_i(s(p))=s_*(e_i(p))$ the push--forward of the frame
$\Lambda(p)$ to $s(p)$ through $s$. This gives a global frame
$\kappa=\{f_1,\ldots,f_n\}$ of the bundle $TV|_S\longrightarrow
S$.
Next observe that the $(n-k)$-frame $\{f_1,\ldots,f_{n-k}\}$ extends,
up to homotopy, to $V^{(2k)}$. This is a consequence of the fact that
the Stiefel manifold $\mathbb{V}_{n-k}(\C^n)$ is $2k$--connected and
hence the obstruction classes to such an extension vanish. Hence,
without loss of generality, we can assume that this frame is defined
on $V^{(2k)}$.

\begin{lemma}\label{lem:htpyframe}
  There exists a homotopy $\{\tau_s\}_{s\in[0,1]}$
  of $(n-k+1)$--frames
  of $TV$
  along $V^{(2k)}$
  such that $\tau_0=\{e_1,\ldots,e_{n-k},e_{n-k+1}\}$
  and $\tau_1=\{f_1,\ldots,f_{n-k},v\}$
  for non--vanishing section $v\in\Gamma(TV)$.
\end{lemma}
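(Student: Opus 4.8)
The plan is to build the homotopy in two stages, first adjusting the first $n-k$ vectors of the frame and then fixing the last one. We start by observing that we have two $(n-k)$-frames of $TV$ defined on all of $V^{(2k)}$: namely $\{e_1,\ldots,e_{n-k}\}$ and $\{f_1,\ldots,f_{n-k}\}$. The key point established just before the lemma is that $\{f_1,\ldots,f_{n-k}\}$ extends over $V^{(2k)}$ because $\V_{n-k}(\C^n)$ is $2k$-connected. I want to say that any two such $(n-k)$-frames over $V^{(2k)}$ are homotopic through $(n-k)$-frames.

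The obstruction-theoretic heart of the argument is as follows. The homotopy classes of sections of the $(n-k)$-frame bundle, with fiber $\V_{n-k}(\C^n)$, over the $2k$-complex $V^{(2k)}$ are governed by the homotopy groups $\pi_j(\V_{n-k}(\C^n))$ for $j\leq 2k$. Since $\V_{n-k}(\C^n)$ is $2k$-connected, all these groups vanish in the relevant range, so any two $(n-k)$-frames over $V^{(2k)}$ are homotopic. Concretely, I would form the difference class of the two frames $\{e_i\}$ and $\{f_i\}$ as a map into the fiber and note it is null-homotopic cell-by-cell over the skeleton $V^{(2k)}$, extending the homotopy inductively over skeleta. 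This produces a homotopy $\{\tau_s'\}$ of $(n-k)$-frames from $\{e_1,\ldots,e_{n-k}\}$ to $\{f_1,\ldots,f_{n-k}\}$.

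Next I would promote this to a homotopy of $(n-k+1)$-frames. Begin with the frame $\tau_0=\{e_1,\ldots,e_{n-k},e_{n-k+1}\}$ and run the homotopy $\{\tau_s'\}$ on the first $n-k$ vectors while dragging $e_{n-k+1}$ along; at each time one must keep the last vector linearly independent from the first $n-k$. This is automatic because given a homotopy of the $(n-k)$-frame, one can complete it continuously to an $(n-k+1)$-frame using that the complementary Stiefel fibration $\V_{n-k+1}(\C^n)\to\V_{n-k}(\C^n)$ has $(2k)$-connected fibers as well, so the partial frame lifts. Tracking $e_{n-k+1}$ in this way lands us at $\{f_1,\ldots,f_{n-k},v\}$ for some non-vanishing $v\in\Gamma(TV)$, which is exactly $\tau_1$. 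The vector $v$ need not equal $f_{n-k+1}$, and indeed the whole point of the lemma for the index construction is that the discrepancy between $v$ and a preferred completion of the $f$-frame is what will carry the Maslov-type information later.

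The main obstacle will be making the second stage rigorous, that is, verifying that the $(n-k)$-frame homotopy lifts to an $(n-k+1)$-frame homotopy with the prescribed endpoint conditions over the full $2k$-skeleton. This is again an obstruction argument, but now relative: the obstructions to lifting live in $\pi_j$ of the fiber $\S^{2(n-k)+1}$ of the projection $\V_{n-k+1}(\C^n)\to\V_{n-k}(\C^n)$, and one must check these vanish for $j\leq 2k$, i.e. that $2(n-k)+1 > 2k$, equivalently $n \geq 2k$. I would either invoke the standing dimension hypotheses implicit in the construction or note that the obstruction to completing a single additional vector over a $2k$-complex vanishes once the fiber sphere has dimension exceeding $2k$; the careful bookkeeping of connectivity ranges is where the argument must be watched most closely.
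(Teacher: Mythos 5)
Your first stage coincides with the paper's: the $2k$--connectivity of $\V_{n-k}(\C^n)$ makes all the obstruction groups $H^i(V^{(2k)};\pi_i(\V_{n-k}(\C^n)))$, $0\leq i\leq 2k$, vanish, so $\{e_1,\ldots,e_{n-k}\}$ and $\{f_1,\ldots,f_{n-k}\}$ are homotopic through $(n-k)$--frames over $V^{(2k)}$. The gap is in your second stage. The fiber of the forgetful map $\V_{n-k+1}(\C^n)\to\V_{n-k}(\C^n)$ is the unit sphere of the orthogonal complement of an $(n-k)$--plane in $\C^n$, hence $\S^{2k-1}$, not $\S^{2(n-k)+1}$, and it is only $(2k-2)$--connected: $\pi_{2k-1}(\S^{2k-1})\cong\Z\neq 0$. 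So the vanishing you require (fibers $2k$--connected, i.e. $\pi_j$ of the fiber trivial for $j\leq 2k$) is false for every $k$, and the dimensional condition $n\geq 2k$ you extract is an artifact of the wrong fiber. No such hypothesis appears in the lemma, and imposing it would invalidate the paper's applications, where $k$ is taken throughout the stable range (up to roughly $k=n$, far beyond $n/2$) on $\S^{2n-1}$ and $\CP^n$.

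The repair is that no connectivity of the fiber is needed, precisely because the lemma leaves the last vector at $s=1$ unprescribed. You are extending a lift of the homotopy $V^{(2k)}\times[0,1]\to\V_{n-k}(TV)$, $(q,s)\mapsto\tau_s'(q)$, from the initial slice $V^{(2k)}\times\{0\}$, where the lift is given by $\{e_1,\ldots,e_{n-k+1}\}$. Since the forgetful map $\V_{n-k+1}(TV)\to\V_{n-k}(TV)$ is a fiber bundle, hence a Serre fibration, and $V^{(2k)}$ is a CW complex, the homotopy lifting property produces the lifted homotopy with no hypotheses at all; equivalently, the relevant obstruction groups are the relative groups $H^{i+1}(V^{(2k)}\times[0,1],V^{(2k)}\times\{0\};\pi_i(\S^{2k-1}))$, which vanish because the pair deformation retracts onto the subspace. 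Obstructions valued in $\pi_*(\S^{2k-1})$ would enter only if you prescribed the last vector at both ends --- and, as you correctly anticipate, that discrepancy is exactly what carries the index later. The paper realizes this lifting concretely rather than quoting it: fix a hermitian metric, note that $e_{n-k+1}(q)$ lies in $\langle\tau_0'(q)\rangle^\perp$ since the frame is unitary, and parallel--transport it along the path of orthogonal complements $\langle\tau_s'(q)\rangle^\perp$; appending the resulting unit section $v_s$ to $\tau_s'$ gives the desired homotopy of $(n-k+1)$--frames, with $v=v_1$. With this replacement of your connectivity argument, your proof becomes the paper's proof.
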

\begin{proof}
  Two $(n-k)$--frames $\{e_1,\ldots,e_{n-k}\}$ and
  $\{f_1,\ldots,f_{n-k}\}$ in $TV|_{V^{(2k)}}$ can be connected
  through $(n-k)$--frames because, as has been mentioned above, the
  Stiefel manifold $\mathbb{V}_{n-k}(\C^n)$ is $2k$--connected and
  hence the obstruction classes lie in
  $H^i(V^{(2k)};\pi_i(\mathbb{V}_{n-k}(\C^n)))\cong\{0\}$ for all
  $0\leq i\leq 2k$.

Consider a homotopy $\tau_s:V^{(2k)}\longrightarrow \mathbb{V}_{n-k}(TV)$ with $\tau_0=\{e_1,\ldots,e_{n-k}\}$ and $\tau_1=\{f_1,\ldots,f_{n-k}\}$ and a hermitian metric on the bundle $TV$. Then at any $q\in V^{(2k)}$ we can pointwise parallel transport the section $e_{n-k+1}(q)$ along the orthogonal complements of the subspaces spanned by $\tau_s(q)$. This defines a vector $v_s(q)$ in the orthogonal complement of $\tau_s(q)$ and hence a non--vanishing section $v:V^{(2k)}\longrightarrow TV/\langle\tau_s\rangle$. We can thus extend the homotopy $\tau_s$ of $(n-k)$--frames to a homonymic homotopy of $(n-k+1)$--frames by adding the section $v_s$ to the $(n-k)$--frame. Note that the choice of the metric is unique up to homotopy and thus the extension is canonical in homotopy. This extension $\{\tau_s\}_{s\in[0,1]}$ satisfies the requirements of the statement.
\end{proof}

We apply Lemma \ref{lem:htpyframe} and obtain a non--vanishing section $v:S\longrightarrow TV\cong\C^n(f_1,\ldots,f_n)$ which lies on the complex rank $k$ trivial subbundle $W\longrightarrow S$ spanned by $\{f_{n-k+1},\ldots,f_n\}$. The section $v$ descends to a non--vanishing section $v:S\longrightarrow TV/\langle f_1,\ldots,f_{n-k}\rangle\cong W$, since $v$ is unitary it is a map to the sphere bundle $v:S\longrightarrow\S(W)\cong S\times \S^{2k-1}$. Denote by $\pi$ be the projection onto the second factor, we have obtained a map
\begin{equation}\label{eq:maslov}
\vartheta:\S^{2k-1}\stackrel{s}{\longrightarrow} \Symp(V,\omega)\stackrel{ev_p}{\longrightarrow} S \stackrel{v}{\longrightarrow} S\times\S^{2k-1}\stackrel{\pi}{\longrightarrow}\S^{2k-1}.
\end{equation}
The map $\vartheta$ has been constructed using a point $p\in V$, an $(n-k+1)$--frame $\lambda$, a map $s:\S^{2k-1}\longrightarrow\Symp(V,\omega)$ and performing two homotopies. Let us write $\vartheta(p,s,\lambda)$ to emphasize the choices. The degree of this map is an integer, this integer will be defined to be the Maslov index associated to the homotopy class $[s]\in\pi_{2k-1}(\Symp(V,\omega))$. Regarding the choices in our construction, we have the following
\begin{lemma}
The homotopy class of the map $\vartheta(p,s,\lambda)$ is independent of $p$, the representative $s$ and the homotopies performed in its construction.
\end{lemma}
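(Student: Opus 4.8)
The plan is to use that the degree of a self--map of $\S^{2k-1}$ is a homotopy invariant: for each of the three kinds of choices it suffices to produce a homotopy between the two resulting maps $\vartheta$. In every case such a homotopy is manufactured by running the construction of $\vartheta$ in a one--parameter family over an interval $[0,1]$ interpolating between the two choices, and the only thing to verify is that each step survives the extra parameter direction. This is controlled by the obstruction--theoretic input already used above, the $2k$--connectivity of $\mathbb{V}_{n-k}(\C^n)$, together with the observation that over a contractible parameter the relevant bundles trivialize.

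First I would dispose of the two auxiliary homotopies. Consider the homotopy used to push $ev_p\circ s$ into $V^{(2k)}$. Since $\dim\S^{2k-1}=2k-1$, cellular approximation places the image inside $V^{(2k-1)}$, and any two such compressions differ by a map of the $2k$--dimensional cylinder $\S^{2k-1}\times[0,1]$ into $V$, which can itself be compressed into $V^{(2k)}$; this cylinder is exactly a homotopy of the evaluation spheres $S$ over which the later steps are natural, so the degree is unaffected. For the homotopy of frames furnished by Lemma~\ref{lem:htpyframe}, its proof already records that the extension is canonical up to homotopy: the Hermitian metric is unique up to a contractible choice, and the connecting homotopy of $(n-k)$--frames is unique up to homotopy because the space of paths in $\mathbb{V}_{n-k}(\C^n)$ between two fixed frames is connected by $2k$--connectivity. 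Hence the non--vanishing section $v$, and with it the degree of $\vartheta$, is independent of these intermediate choices.

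Next comes independence of the representative $s$. Given a homotopy $H:\S^{2k-1}\times[0,1]\to\Symp(V,\omega)$ from $s_0$ to $s_1$, I would apply $ev_p$ to obtain a map of the $2k$--dimensional cylinder into $V$ and compress it into $V^{(2k)}$. Pushing the frame $\Lambda(p)$ forward, applying Lemma~\ref{lem:htpyframe}, extracting $v$ and projecting by $\pi$, all fibrewise in the parameter $t$, yields a homotopy $\vartheta_0\simeq\vartheta_1$ and hence equal degrees; here the obstructions to carrying the frames across the cylinder again lie in $H^i\bigl(-;\pi_i(\mathbb{V}_{n-k}(\C^n))\bigr)$ with $i\le 2k$ and therefore vanish. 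Independence of $p$ is analogous and simpler. As $V^{(2k)}$ is path--connected, a path $p_t$ from $p_0$ to $p_1$ makes all the data vary continuously: $\lambda$ is defined over all of $V^{(2k)}$, so $\lambda(p_t)$, a continuously chosen completion $\Lambda(p_t)$, the pushforward frame and the evaluation sphere $S_t$ all move continuously, and since the parameter interval is contractible every bundle in sight trivializes, giving a homotopy $\vartheta(p_t,s,\lambda)$.

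The point demanding the most care --- and the main obstacle --- is the tight dimension bookkeeping: each homotopy enlarges the base by exactly one dimension, so the pertinent obstruction classes sit in degrees $i\le 2k$, precisely the range annihilated by the $2k$--connectivity of the Stiefel manifold, and a single unit more of dimension would break the argument. One must also check that the interpolating section $v_t$ remains nowhere zero for every $t$, but this is automatic from the orthogonal parallel--transport recipe in the proof of Lemma~\ref{lem:htpyframe}, which outputs a unit vector fibrewise for each parameter value. Granting these, all three independence assertions collapse to the homotopy invariance of degree.
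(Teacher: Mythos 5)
Your overall strategy --- run the construction in a one--parameter family and appeal to the homotopy invariance of the degree --- is the natural one (the paper in fact states this lemma without proof), and your treatment of the compression of $ev_p\circ s$ into $V^{(2k)}$ and of the dependence on $s$ and $p$ is correct in outline. However, there is a genuine gap in the step where you dispose of the homotopy supplied by Lemma \ref{lem:htpyframe}. You assert that the connecting homotopy of $(n-k)$--frames is unique up to homotopy \emph{because the space of paths in $\mathbb{V}_{n-k}(\C^n)$ between two fixed frames is connected}. This conflates paths in the fiber with paths in the space of frames over $V^{(2k)}$. The homotopy $\tau_s$ of Lemma \ref{lem:htpyframe} is a path of sections of a bundle with $2k$--connected fiber $\mathbb{V}_{n-k}(\C^n)$ over the $2k$--dimensional complex $V^{(2k)}$; two such paths with the same endpoints are homotopic rel endpoints only if the obstructions in $H^{j-1}\bigl(V^{(2k)};\pi_j(\mathbb{V}_{n-k}(\C^n))\bigr)$ vanish for every $j$. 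Connectivity kills these groups for $j\leq 2k$, but the top one, $H^{2k}\bigl(V^{(2k)};\pi_{2k+1}(\mathbb{V}_{n-k}(\C^n))\bigr)\cong H^{2k}(V^{(2k)};\Z)$, survives, because $\pi_{2k+1}(\mathbb{V}_{n-k}(\C^n))\cong\Z$ is precisely the first nonvanishing homotopy group of the Stiefel manifold. So over $V^{(2k)}$ the connecting homotopy is \emph{not} canonical up to homotopy, and your argument that the section $v$, hence $\deg\vartheta$, is independent of this choice breaks down exactly at the ``single unit more of dimension'' you yourself flag as fatal: the base of this particular homotopy is $V^{(2k)}$, of dimension $2k$, not $\S^{2k-1}$.

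The missing idea is a localization to $S$. The map $\vartheta$ only depends on the restriction $v|_S$, so it suffices to compare the two choices over $S$, and there the dimension count does work: since $\dim S=2k-1$, the groups $H^{j-1}\bigl(S;\pi_j(\mathbb{V}_{n-k}(\C^n))\bigr)$ vanish for every $j$, because nonzero coefficients force $j\geq 2k+1$ while $H^{j-1}(S)=0$ once $j-1\geq 2k$. Hence any two connecting homotopies of $(n-k)$--frames, after restriction to $S$, are homotopic rel endpoints; parallel transport along a homotopy of such paths carries $v|_S$ to $v'|_S$ through unit sections of $W$, and the degrees agree. The same localization is needed in your treatment of the dependence on $s$, $p$ and the compression: the family construction over the $2k$--dimensional cylinder only requires \emph{existence} of frame homotopies (obstructions in degrees $\leq 2k$, which do vanish, as you say), but matching the family at its two ends with the original constructions requires \emph{uniqueness}, which holds because the ends are $(2k-1)$--dimensional. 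With this correction your proof is complete.
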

Denote by $\vartheta([s],\lambda)$ the map constructed in (\ref{eq:maslov}) for a choice of $(n-k+1)$--frame $\lambda$ and any choice of $p\in V^{(2k)}$ and representative $s$ in its homotopy class.
\begin{definition}\label{def:sympMaslovI}
The $k$th Maslov index associated to an $(n-k+1)$--frame $\lambda$ is the map
$$\mu_k:\pi_{2k-1}(\Symp(V,\omega))\longrightarrow\Z,\quad \mu_k([s])=\deg(\vartheta([s],\lambda)).$$
\end{definition}
The map $\mu_k$ can be defined if there exists an $(n-k+1)$--frame
$\lambda$ of $TV$ along a $2k$--skeleton $V^{(2k)}$. This requires the
hypothesis $c_k(V,\omega)=0$. However, the construction of the map
$\vartheta([s],\lambda)$ carries over word-for-word to the setting
where $\lambda$ is just an $(n-k+1)$--frame along the sphere
$S=(ev_p\circ s)(\S^{2k-1})$.

Suppose that $\pi_{2k-1}(V)=\{0\}$ or just
$ev_p \circ s=0 \in \pi_{2k-1}(V)$. Then a capping disk $\D^{2n}$ for
$S=\partial\D^{2n}$ provides a frame
$\lambda=\{e_1,\ldots,e_{n-k+1}\}$ of $TV|_S$. The degree of the map
we obtain depends on the choice of capping disk, the dependence goes
as follows. The degrees obtained by two different choices $D_0$ and
$D_1$ of capping disks for $S$ differ by the evaluation of
$c_k(V,\omega)$ along the $2k$--sphere $D_0\cup_S D_1$. Denote by
$N_k$ the minimal $k$th Chern number, i.e. the positive generator
$N_k$ of the image of the morphism
$c_k(V,\omega):\pi_{2k}(V)\longrightarrow\Z$. Hence we can define the
map $\vartheta([s],\lambda)$ as in \ref{eq:maslov}. This time the map
is independent of $\lambda$ as long as we consider its degree to lie
in $\Z/N_k\Z$.
\begin{definition}\label{def:sympMaslovII}
The $k$th Maslov index of a class $[s]\in\ker(\pi_{2k-1}(ev_p))\subset\pi_{2k-1}(\Symp(V,\omega))$ is the map
$$\mu_k:\ker(\pi_{2k-1}(ev_p))\longrightarrow\Z/N_k\Z,\quad \mu_k([s])=\deg(\vartheta([s])).$$
\end{definition}

Definitions \ref{def:sympMaslovI} and \ref{def:sympMaslovII} complement each other. We refer to the maps in Definition \ref{def:sympMaslovI} as A--type Maslov indices and to the maps in Definition \ref{def:sympMaslovII} as B--type Maslov indices. In a manifold with both $c_k(V,\omega)=0$ and $\pi_{2k-1}(V)=\{0\}$ the element in $\Z/N_k\Z$ from Definition \ref{def:sympMaslovII} coincides with the reduction modulo $N_k$ of the integer from Definition \ref{def:sympMaslovI}. See Subsection \ref{ssec:algebraic} for more details.
\subsection{Indices on the group of contactomorphisms}\label{ssec:contmaslov} Let $(M,\xi)$ be a $(2n+1)$--dimensional contact manifold with $c_k(\xi)=0$ and $G=\Cont(M,\xi)$. We describe the construction of a map
$$\mu_k:\pi_{2k-1}(\Cont(M,\xi))\longrightarrow\Z$$
using the same technique explained in Subsection \ref{ssec:sympmaslov}. The procedure defining $\mu_k$ for the group of symplectomorphisms only requires a symplectic bundle with a unitary frame along the appropriate skeleton of the symplectic manifold. The contact distribution $\xi$ can be endowed with a symplectic 	structure: given a contact form $\alpha$ such that $\xi=\ker(\alpha)$, the $2$--form $d\alpha$ is a symplectic structure on the bundle $\xi\longrightarrow M$. The conformal symplectic structure of this bundle does not depend on the choice of $\alpha$ and thus the $k$th Chern class $c_k(\xi)$ of $\xi$ is well--defined.

Fix a frame $\lambda=\{e_1,\ldots,e_{n-k+1}\}$ for the rank--$n$ complex bundle $\xi$ along the $2k$--skeleton $M^{(2k)}$ and a point $p\in M^{(2k)}$. Consider a representative $s:\S^{2k-1}\longrightarrow G$ of a homotopy class $[s]\in\pi_{2k-1}(G)$. The construction in Subsection \ref{ssec:sympmaslov} yields a sphere $S=\im(ev_p\circ s)$ and a unitary section $v:S\longrightarrow\xi\cong\C^n(f_1,\ldots,f_n)$ whose image lies in the subbundle $\C^n(f_{n-k+1},\ldots,f_n)$. This gives a section of the sphere bundle of the rank--$k$ trivial quotient bundle $W\cong\xi/\langle f_1,\ldots,f_{n-k}\rangle$ and thus a map
\begin{equation}\label{eq:contmaslov}
\vartheta:\S^{2k-1}\stackrel{s}{\longrightarrow} \Cont(V,\omega)\stackrel{ev_p}{\longrightarrow} S \stackrel{v}{\longrightarrow} S\times\S^{2k-1}\stackrel{\pi}{\longrightarrow}\S^{2k-1}.
\end{equation}
The degree of $\vartheta$ is the required integer. The construction also works if $c_k(\xi)$ does not vanish but there exists a unitary frame $\lambda$ along the sphere $S$. The discussion preceding Definition \ref{def:sympMaslovII} applies and we can define the maps in both cases. Denote by $N_k$ the $k$th minimal Chern number of $\xi$.
\begin{definition}\label{def:contMaslov}
The $k$th Maslov index associated to an $(n-k+1)$--frame $\lambda$ is the map
$$\mu_k:\pi_{2k-1}(\Cont(V,\omega))\longrightarrow\Z,\quad \mu_k([s])=\deg(\vartheta([s],\lambda)).$$
When the class $[s]$ belongs to $\ker(\pi_{2k-1}(ev_p))\subset\pi_{2k-1}(\Cont(M,\xi))$ the map
$$\mu_k:\ker(\pi_{2k-1}(ev_p))\longrightarrow\Z/N_k\Z,\quad \mu_k([s])=\deg(\vartheta([s])).$$
is also referred to as the $k$th Maslov index.
\end{definition}
The maps coincide in their common domain of definition up to reduction mod $N_k$. This is again a consequence of the geometric fact that the frame $\lambda$ allows us to define a well--defined map to $\Z$ whereas the dependence of such choice forces a reduction to values in $\Z / N_k \Z$ .
\subsection{Algebraic interpretation}\label{ssec:algebraic}
In this subsection we provide some intuition for the indices $\mu_k$ from the homotopy viewpoint and a cohomological interpretation. We begin with the definition of $\mu_k$ in terms of the homotopy groups of $U(n)$.

The construction of the Maslov index $\mu_1$ is based on the (square of the) determinant map $\det:U(n)\longrightarrow\S^1$. There are maps from $U(n)$ to other spheres: the transitive action of $U(n)$ on $\C^n$ induces a map $p^n:U(n)\longrightarrow\S^{2n-1}$. In linear algebra terms, $p^n(A)=\{\mbox{first column of }A\}$. The homotopy class of $p^n$ does not change if another column is chosen. Heuristically, we want to construct maps $U(n)\longrightarrow\S^{2k-1}$ that allow us to define $\mu_k$ the same way the determinant map defines $\mu_1$. The formal algebraic phenomenon behind this is that in rational homotopy (or rational cohomology) $U(n)$ is a product of odd dimensional spheres. This should provide a map $\pi_{2k-1}U(n)\longrightarrow\pi_{2k-1}(\S^{2k-1})
$. We can readily construct this map as follows.

The group $U(n)$ acts transitively on the unit sphere $\S^{2n-1}$ with stabilizer group isomorphic to $U(n-1)$. This yields the Serre fibration
\begin{equation}\label{eq:unitary}
U(n-1)\stackrel{i^n}{\longrightarrow}U(n)\stackrel{p^n}{\longrightarrow}\S^{2n-1}.
\end{equation}
This implies that the maps $\pi_*(i^n):\pi_*(U(n-1))\longrightarrow\pi_*(U(n))$ are isomorphisms for $*\leq2n-3$. 

\begin{proposition}\label{prop:htpyU(n)} Let $k,n\in\N$ be such that $k\leq n-1$.
Then there exists a sequence of isomorphisms
$$\pi_{2k-1}U(n)\longrightarrow\pi_{2k-1}U(n-1)\longrightarrow\ldots\longrightarrow\pi_{2k-1}U(k)\longrightarrow \frac1{(k-1)!}\pi_{2k-1}(\S^{2k-1})\cong\Z.$$\hfill$\Box$
\end{proposition}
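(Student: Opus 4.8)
The statement naturally splits into two parts: the chain of stabilization isomorphisms terminating at $\pi_{2k-1}U(k)$, and the final identification of $\pi_{2k-1}U(k)$ with $\frac{1}{(k-1)!}\pi_{2k-1}(\S^{2k-1})$. My plan is to read both off from the long exact homotopy sequences of the unitary fibrations (\ref{eq:unitary}) for varying index, the only nonformal ingredients being two classical homotopy computations of Bott.

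First I would handle the stabilization. For each $m$ with $k+1\leq m\leq n$, the fibration $U(m-1)\stackrel{i^m}{\longrightarrow}U(m)\stackrel{p^m}{\longrightarrow}\S^{2m-1}$ yields the exact segment
$$\pi_{2k}(\S^{2m-1})\longrightarrow \pi_{2k-1}(U(m-1))\stackrel{\pi_*(i^m)}{\longrightarrow}\pi_{2k-1}(U(m))\longrightarrow\pi_{2k-1}(\S^{2m-1}).$$
The hypothesis $m\geq k+1$ forces $2k<2m-1$, so both outer groups vanish and $\pi_*(i^m)$ is an isomorphism; its inverse is the $m$th arrow in the proposition, and composing these inverses produces $\pi_{2k-1}U(n)\cong\cdots\cong\pi_{2k-1}U(k)$. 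The same vanishing holds for every $m\geq k+1$, so this common group agrees with the stable group $\pi_{2k-1}(U)$, which equals $\Z$ by Bott periodicity since $2k-1$ is odd; this fixes the value of every term in the chain.

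For the final arrow I would use one further instance of (\ref{eq:unitary}), namely $U(k-1)\longrightarrow U(k)\stackrel{p^k}{\longrightarrow}\S^{2k-1}$, whose long exact sequence reads
$$\pi_{2k-1}(U(k))\stackrel{\pi_*(p^k)}{\longrightarrow}\pi_{2k-1}(\S^{2k-1})\stackrel{\partial}{\longrightarrow}\pi_{2k-2}(U(k-1))\longrightarrow\pi_{2k-2}(U(k)).$$
Here $\pi_{2k-2}(U(k))$ lies in the stable range and vanishes because $2k-2$ is even, so $\partial$ is surjective onto $\pi_{2k-2}(U(k-1))$. The decisive input is the classical computation $\pi_{2m}(U(m))\cong\Z/m!\,\Z$; taking $m=k-1$ gives $\pi_{2k-2}(U(k-1))\cong\Z/(k-1)!\,\Z$. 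Exactness then identifies $\im(\pi_*(p^k))=\ker(\partial)=(k-1)!\cdot\pi_{2k-1}(\S^{2k-1})$, the index-$(k-1)!$ subgroup of $\Z$. Since the source $\pi_{2k-1}(U(k))\cong\Z$ surjects onto a copy of $\Z$, the map $\pi_*(p^k)$ is injective, hence an isomorphism onto $(k-1)!\cdot\pi_{2k-1}(\S^{2k-1})$; rescaling by $\frac{1}{(k-1)!}$ yields the stated isomorphism and explains the arithmetic factor in the notation.

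The main obstacle is precisely the integral computation $\pi_{2m}(U(m))\cong\Z/m!\,\Z$: everything else is the formal long exact sequence together with the vanishing ranges of the homotopy of spheres, but the exact order $m!$ of this unstable group is genuinely deep and is the source of the factorial, so at that point I would invoke the classical results of Bott. A secondary point requiring care is the bookkeeping behind $\frac{1}{(k-1)!}\pi_{2k-1}(\S^{2k-1})$: one must verify that $\pi_*(p^k)$ hits \emph{exactly} the subgroup $(k-1)!\,\Z$, and not merely some subgroup of that index, which is guaranteed by the surjectivity of $\partial$ onto the group of order $(k-1)!$ established above.
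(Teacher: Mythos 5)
Your proof is correct, and it supplies an argument where the paper gives essentially none: the proposition is stated with only a concluding remark pointing at the literature. Your handling of the stabilization chain is exactly what the paper has in mind, since the isomorphism range $\pi_*(i^m):\pi_*(U(m-1))\to\pi_*(U(m))$ for $*\leq 2m-3$ is recorded right before the statement and, for $*=2k-1$, is precisely your condition $m\geq k+1$. Where you genuinely diverge is the final arrow. The paper attributes the $(k-1)!$ factor to characteristic--class theory --- the integrality of the Chern character on the reduced K--theory of a sphere, the Atiyah--Singer theorem for the twisted signature operator, or the classical divisibility results of \cite{BH,Pe} (a rank-$k$ bundle on $\S^{2k}$ has $c_k[\S^{2k}]$ an arbitrary multiple of $(k-1)!$). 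You instead stay inside unstable homotopy theory: you run the exact sequence of $U(k-1)\to U(k)\to\S^{2k-1}$ one step past $\pi_{2k-1}(\S^{2k-1})$ and import Bott's unstable computation $\pi_{2m}(U(m))\cong\Z/m!\,\Z$ (this is in \cite{Bo}, which the paper already cites), so that surjectivity of $\partial$ onto $\Z/(k-1)!\,\Z$ pins down $\im\pi_*(p^k)=\ker\partial=(k-1)!\,\Z$ exactly, and injectivity of $\pi_*(p^k)$ comes for free because a surjection $\Z\to\Z$ is an isomorphism (here you correctly use that $\pi_{2k-1}(U(k))$ is already stable, hence $\Z$). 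Your route buys a self-contained, purely homotopy-theoretic proof, modulo a single citation, and it makes explicit the point you flag yourself: that the image is exactly, not merely contained in, the index-$(k-1)!$ subgroup. The paper's suggested route buys the link between the factorial and Chern-class divisibility via clutching functions, which is the form in which the index is actually exploited later (e.g. Theorem \ref{thm:cohom}); the two inputs are equivalent, and the exact sequence you wrote is precisely the translation between them.
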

There are several reasons for the $(k-1)!$ factor on the rightmost
isomorphism. For instance, among these are the description of the
Chern character in the reduced K--theory of a sphere and the
Atiyah--Singer theorem for the twisted signature operator. See
\cite{BH,Pe} for a more classical approach.

Suppose that $V=\C^n$ and we consider the group $G\simeq U(n)$ of
linear symplectomorphisms. Then the index $\mu_k$ is precisely the
image via the chain of isomorphisms described in Proposition
\ref{prop:htpyU(n)}. The definition in the non--linear context is
obtained by pointwise linearizing the family of symplectomorphisms and
considering the image via a parametric version of the chain of
isomorphisms. The frames are required to be fixed in order to identify the unitary maps between tangent spaces with the group $U(n)$. The description in Subsection \ref{ssec:sympmaslov} is more elementary and this homotopic interpretation is better understood once the construction in Subsection \ref{ssec:sympmaslov} has been introduced. This homotopic interpretation also holds for the contact case where fiberwise the role of $\C^n$ is taken by $\xi_p$ but not~$T_pV$.

Now that the homotopical nature of $\mu_k$ has been clarified, we
establish a formula that allows one to compute $\mu_k$ in certain
situations. Let us consider a symplectic manifold $(V,\omega)$ and a
point $p\in V^{(2k)}$. The evaluation map
$ev_p:\Symp(V,\omega)\longrightarrow V$ can be used to detect
non--trivial elements in the homotopy groups of
$\Symp(V,\omega)$. Suppose that we have a class
$[s]\in\ker(\pi_{2k-1}(ev_p))\subset\pi_{2k-1}(\Symp(V,\omega))$, the
B--type Maslov index is then defined. This index $\mu_k([s])$ can
be computed with the appropriate cohomological data.

We introduce some notation. Given a map $[s]\in\pi_{2k-1}(\Symp(V,\omega))$ denote by $X_s$ the (isomorphism class of the) symplectic bundle
$$V\longrightarrow X_s\stackrel{\pi}{\longrightarrow}\S^{2k}$$
with clutching function $s$ and denote by $\V X_s=\ker d\pi$ the
vertical bundle of $\pi$. The Serre fibration $\pi$ does not
necessarily have a section $\s:\S^{2k}\longrightarrow X_s$. We can
find such a section $\s_\SU$ over the base $\S^{2k}$ with the
neighborhood $\SU$ of a point removed. The extension of $\s_\SU$ to
$\S^{2k}$ depends on the clutching map $s$. We can consider a point
$p\in\pi^{-1}(\partial\SU)$ in the pre--image of the boundary equator
$\partial(\S^{2k}\setminus\SU)$. Then the image of $ev_p(s)$ describes
a section over $p\in\pi^{-1}(\partial\SU)$. In the case where this
image is a contractible subspace we can extend $\s_\SU$ to $\s$ using
a homotopy of $ev_p(s)$ to a point. This is the geometric description
of the obstruction class in
$H^{2k}(\S^{2k},\pi_{2k-1}(V))\cong\pi_{2k-1}(V)$ for the
corresponding lifting problem.

The choice of such an extension, that is the choice of homotopy to a
point, is relevant. The $k$th Chern class of $V$ evaluated on a sphere
representing the difference between two capping disks results in an
ambiguity modulo $N_k$. This corresponds to the uniqueness obstruction
class in $H^{2k}(\S^{2k},\pi_{2k}(V))\cong\pi_{2k}(V)$.

The B--type index $\mu_k$ can then be computed with data intrinsic to $X_s$.

\begin{theorem}\label{thm:cohom} Consider a point $p\in V^{(2k)}$ and a class $[s]\in\ker(\pi_{2k-1}(ev_p))$. Then there exists a section $\s\in\Gamma(X_s)$ and the Maslov index satisfies
$$\mu_k\left([s]\right)=\int_{\S^{2k}}c_k(\s^*(\V X_s))\pmod{N_k}.$$
\end{theorem}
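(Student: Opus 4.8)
The plan is to produce the section $\s$ from the hypothesis $[s]\in\ker(\pi_{2k-1}(ev_p))$, and then to identify $\int_{\S^{2k}}c_k(\s^*\V X_s)$ with the degree $\deg\vartheta$ defining the B--type index by reading both as the \emph{same} primary obstruction, the reduction mod $N_k$ reflecting the non--uniqueness of $\s$.

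First I would construct $\s$. Since $[s]\in\ker(\pi_{2k-1}(ev_p))$, the map $ev_p\circ s\colon\S^{2k-1}\to V$ is null--homotopic. Deleting a small ball $\SU$, the bundle $\pi\colon X_s\to\S^{2k}$ is trivial over $\S^{2k}\setminus\SU$ and carries the constant section $\s_\SU\equiv p$, which along the equator $\partial\SU\cong\S^{2k-1}$ traces the sphere $S=(ev_p\circ s)(\S^{2k-1})$. The primary obstruction to extending $\s_\SU$ across $\SU$ lives in $H^{2k}(\S^{2k};\pi_{2k-1}(V))\cong\pi_{2k-1}(V)$ and equals $[ev_p\circ s]$, which vanishes; a choice of null--homotopy then yields a global section $\s\in\Gamma(X_s)$. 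I record for the final step that two such null--homotopies differ by a class in $\pi_{2k}(V)$.

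Next I would compute $\s^*\V X_s$ through its two hemisphere trivializations and compare them over the equator. Over $\S^{2k}\setminus\SU$ the section is the constant $p$, so $\s^*\V X_s$ is $T_pV$ framed by $\Lambda(p)$, with distinguished $(n-k+1)$--subframe $\lambda(p)$. Over the equator the fibre at $\theta$ is $T_{s(\theta)(p)}V$, and the clutching of $X_s$ identifies the reference frame $\lambda(p)$ at $p$ with $\{d_ps(\theta)e_i(p)\}=\{f_i(\theta)\}=\kappa$, the pushed--forward frame of Subsection \ref{ssec:sympmaslov}; whereas from the $\SU$ side the chosen null--homotopy is precisely a capping disk for $S$ and supplies the frame $\lambda$ of the B--type construction preceding Definition \ref{def:sympMaslovII}. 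Hence the two hemisphere framings are compared over the equator by the $U(n)$--valued map carrying $\lambda$ to $\kappa$. By \cite{MiS}, $c_k(\s^*\V X_s)\in H^{2k}(\S^{2k};\Z)$ is the primary obstruction to an $(n-k+1)$--frame, valued in $\pi_{2k-1}(\V_{n-k+1}(\C^n))\cong\Z$, so evaluating on $[\S^{2k}]$ returns the homotopy class of this equatorial comparison $\S^{2k-1}\to\V_{n-k+1}(\C^n)$. Because $\V_{n-k}(\C^n)$ is $2k$--connected we have $\pi_{2k-1}(\V_{n-k}(\C^n))=0$, so the first $n-k$ vectors of $\lambda$ and $\kappa$ may be homotoped to agree --- this is exactly the content of Lemma \ref{lem:htpyframe} --- and the comparison descends to the fibre $\S^{2k-1}$ of $\V_{n-k+1}(\C^n)\to\V_{n-k}(\C^n)$, along which the fibre inclusion induces an isomorphism $\pi_{2k-1}(\S^{2k-1})\stackrel{\ \cong\ }{\longrightarrow}\pi_{2k-1}(\V_{n-k+1}(\C^n))$ carrying generator to generator. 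Under this identification the comparison is the unit section $v$ of Lemma \ref{lem:htpyframe} projected into the rank--$k$ quotient $W$; at equatorial angle $\theta$ its value is $\bar v(s(\theta)(p))$, so the comparison map is exactly the composite $\vartheta$ of (\ref{eq:maslov}). Thus $\int_{\S^{2k}}c_k(\s^*\V X_s)=\deg\vartheta=\mu_k([s])$ with the standard orientation conventions.

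Finally I would account for the reduction mod $N_k$. The section $\s$, equivalently the null--homotopy of $ev_p\circ s$, is not canonical: two choices $\s_0,\s_1$ differ by a class $\alpha\in\pi_{2k}(V)$ obtained by gluing the two capping disks along $S$, and the top Chern numbers of $\s_0^*\V X_s$ and $\s_1^*\V X_s$ differ by $\langle c_k(V,\omega),\alpha\rangle$, which lies in $N_k\Z$ by definition of the minimal $k$th Chern number $N_k$. Hence $\int_{\S^{2k}}c_k(\s^*\V X_s)$ is well defined in $\Z/N_k\Z$ and coincides there with the B--type index, as claimed. I expect the obstruction--theoretic identification of the previous paragraph to be the main obstacle: it requires matching the two hemisphere framings with $\kappa$ and $\lambda$, invoking the connectivity of $\V_{n-k}(\C^n)$ to reduce the comparison to the fibre sphere, and checking that the normalization constant is $1$ --- the factor $(k-1)!$ of Proposition \ref{prop:htpyU(n)} governs the unrelated map $\pi_{2k-1}(U(n))\to\pi_{2k-1}(\S^{2k-1})$ and does not intervene here.
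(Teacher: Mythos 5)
Your proposal is correct and takes essentially the same route as the paper: the construction of the section $\s$ from the null--homotopy of $ev_p\circ s$ and the mod--$N_k$ ambiguity attributed to the choice of capping disk (a $\pi_{2k}(V)$--worth of choices, paired against $c_k(V,\omega)$) are exactly the content of the paper's discussion preceding Theorem \ref{thm:cohom}. The paper in fact states the theorem without writing out the central identification $\deg\vartheta=\int_{\S^{2k}}c_k(\s^*(\V X_s))$, and your obstruction--theoretic middle step --- reading $c_k$ as the primary obstruction to an $(n-k+1)$--frame valued in $\pi_{2k-1}(\V_{n-k+1}(\C^n))\cong\Z$, then using the $2k$--connectivity of $\V_{n-k}(\C^n)$ (i.e.\ Lemma \ref{lem:htpyframe}) to push the equatorial comparison into the fibre sphere, with no $(k-1)!$ renormalization entering --- correctly supplies precisely the step the paper leaves implicit.
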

\begin{corollary}\label{cor:cohom}
Suppose that $c_k(V,\omega)=0$. Then the A-type index $\mu_k$ vanishes identically on the subgroup $\ker(\pi_{2k-1}(ev_p))\subset\pi_{2k-1}(\Symp(V,\omega))$.
\end{corollary}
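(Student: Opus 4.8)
The plan is to reduce the Corollary to Theorem \ref{thm:cohom} and then to the vanishing of a single top Chern class. First I would note that $c_k(V,\omega)=0$ makes the homomorphism $c_k\colon\pi_{2k}(V)\to\Z$ identically zero, so the minimal Chern number is $N_k=0$ and $\Z/N_k\Z=\Z$. Consequently reduction modulo $N_k$ is the identity and, on the common domain $\ker(\pi_{2k-1}(ev_p))$, the A--type index of Definition \ref{def:sympMaslovI} equals the B--type index of Definition \ref{def:sympMaslovII} as an honest integer. Theorem \ref{thm:cohom} then yields the exact identity
$$\mu_k([s])=\int_{\S^{2k}}c_k\bigl(\s^*(\V X_s)\bigr),$$
and the whole statement collapses to the assertion that this integer is zero.

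To kill the integral I would show that the rank--$n$ bundle $\s^*(\V X_s)\to\S^{2k}$ carries an $(n-k+1)$--frame: a complex bundle admitting $n-k+1$ independent sections has $c_j=0$ for all $j\ge k$, and in particular $c_k=0$. The frame is manufactured from the global frame $\lambda=\{e_1,\dots,e_{n-k+1}\}$ of Definition \ref{def:sympMaslovI}, which exists precisely because $c_k(V,\omega)=0$. Covering $\S^{2k}$ by the two hemispheres $D_\pm$ used to build the section $\s$, the bundle $\s^*(\V X_s)$ is a capping--disk pullback of $TV$ on $D_+$ and the trivial bundle $T_pV$ on $D_-$; pulling $\lambda$ back along the capping disk on $D_+$ and keeping the constant frame $\Lambda(p)$ on $D_-$ produces an $(n-k+1)$--frame over each hemisphere. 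Here the hypothesis $[s]\in\ker(\pi_{2k-1}(ev_p))$ is used to cap the evaluation sphere $ev_p\circ s$ by a disk landing in $V^{(2k)}$, so that $\lambda$ really is defined over the image of $\s$.

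The main obstacle is to match these two hemisphere frames across the equator. The comparison takes values in $\pi_{2k-1}\bigl(\V_{n-k+1}(\C^n)\bigr)$, and, in contrast with the $(n-k)$--frame situation of Lemma \ref{lem:htpyframe}, the Stiefel manifold $\V_{n-k+1}(\C^n)$ is only $(2k-2)$--connected: its first nontrivial homotopy group is $\pi_{2k-1}\bigl(\V_{n-k+1}(\C^n)\bigr)\cong\Z$, which is exactly the group in which $\mu_k$ naturally lives. The equatorial clutching of $\V X_s$ is the derivative cocycle $\theta\mapsto d\,s(\theta)$, so the mismatch splits into an \emph{ambient} contribution coming from the frame $\lambda(ev_p\circ s)$ and a \emph{pushed--forward} contribution $s(\theta)_*\Lambda(p)$. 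The null--homotopy supplied by the kernel hypothesis trivializes the ambient part, since $ev_p\circ s$ is contractible; the delicate point I expect to be hardest is showing that the pushed--forward part is then also null--homotopic in $\V_{n-k+1}(\C^n)$, i.e.\ that once the evaluation sphere is capped the remaining obstruction is measured by $c_k(TV)$ and hence dies. Tracking the derivative cocycle through the capping disk and expressing its Stiefel class via $c_k(TV)=0$ is where the real work of the proof lies.
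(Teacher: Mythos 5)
Your opening reduction is correct and is exactly how the paper's proof begins: since $c_k(V,\omega)=0$ the minimal Chern number degenerates ($N_k=\infty$ in the paper's notation, $\Z/N_k\Z=\Z$ in yours), the A--type and B--type indices agree as integers on $\ker(\pi_{2k-1}(ev_p))$, and Theorem \ref{thm:cohom} reduces the Corollary to the vanishing of $\int_{\S^{2k}}c_k\bigl(\s^*(\V X_s)\bigr)$. Your observation that a rank--$n$ complex bundle over $\S^{2k}$ with an $(n-k+1)$--frame has $c_k=0$, and your identification of $\s^*(\V X_s)$ as the bundle clutched by the derivative cocycle over the two hemispheres, are also fine.

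The rest, however, is a genuine gap, and in fact a circle. The equatorial mismatch you propose to kill --- the class in $\pi_{2k-1}\bigl(\V_{n-k+1}(\C^n)\bigr)\cong\Z$ comparing the capping--disk frame with the pushed--forward frame $s(\cdot)_*\Lambda(p)$ --- is not an auxiliary obstruction: under the degree construction of Subsection \ref{ssec:sympmaslov} (equivalently, under Theorem \ref{thm:cohom}) that class \emph{is} $\mu_k([s])$. So ``showing the pushed--forward part is null--homotopic'' is not the hardest step of your proof; it is a restatement of the Corollary itself, and nothing beyond the first paragraph has been proved. Moreover, no argument of the kind you sketch --- tracking the derivative cocycle through the capping disk, i.e.\ working only along the orbit sphere --- can close it, because that local data is consistent with a nonzero answer: take $V=\C^n$, $p=0$, and $s$ a generator of $\pi_{2k-1}(U(n))$ acting linearly. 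Then $c_k(TV)=0$, the evaluation $ev_p\circ s$ is constant so the capping is trivial, and yet the mismatch class is nonzero --- this is precisely the computation carried out in the proof of Theorem \ref{thm:sph}, where it is the source of \emph{non}-vanishing. Whatever forces the integral to vanish must therefore use global information about $V$ and the total space $X_s$, and this is what the paper's proof invokes instead of frame matching: it decomposes $H_{2k}(X_s;\Z)$ by the Leray--Serre spectral sequence (using the section) into a fiber summand and a base summand, notes that $c_k(\V X_s)$ pulls back to $c_k(TV)=0$ on the fiber, argues that it pairs trivially with the base summand $H_{2k}(\S^{2k};\Z)\otimes H_0(V;\Z)$, and concludes that it pairs to zero with $\s_*[\S^{2k}]$. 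That global--homological step --- controlling $c_k(\V X_s)$ on all of $H_{2k}(X_s;\Z)$ rather than comparing frames along the equator --- is the idea missing from your proposal; as the linear example shows, it is also the place where hypotheses on $V$ beyond $c_k(TV)=0$ (in particular its global, rather than local, structure) must enter.
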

\begin{proof}
Consider the inclusion $i:V\longrightarrow X_s$ of the fibre $V$. The Leray--Serre spectral sequence (or the Morse--Bott height function) and the section $s$ yields the isomorphism $$H_{2k}(X_s,\Z)\cong H_0(\S^{2k},\Z)\otimes H_{2k}(V,\Z)\oplus H_{2k}(\S^{2k},\Z)\otimes H_0(V,\Z).$$
The cohomology class $c_k(\V X_s)\in H^{2k}(X,\Z)$ pairs trivially
with $H_{2k}(\S^{2k},\Z)\otimes H_0(V,\Z)$ and pulls--back to
$i^*c_k(\V X_s)=c_k(TV)\in H^{2k}(V,\Z)$. The statement follows from
Theorem \ref{thm:cohom} with $N_k=\infty$.
\end{proof}

\subsection{Vanishing of the A-type indices in the symplectic case}
\label{ssec:symp}
An immediate consequence of Corollary \ref{cor:cohom} is that
$\mu_k=0$ for the A-type indices on the standard symplectic torus
$\T^{2n}$ when $k\geq 2$. In fact, the authors are not aware of any
example of a closed symplectic manifold $V$ with $c_k(TV)=0$ such that
an A-type index $\mu_k$, $k\geq 2$, does not vanish.

In contrast, the classical Maslov class $\mu_1$ on $\T^{2n}$ need not
be zero for a rather superficial reason. Namely, this index is
obviously non-zero for a global frame on $\T^{2n}$ ``twisted in a
coordinate direction''. However, as we prove below, one can make
$\mu_1$ vanish by a suitable choice of a frame. (For $\T^{2n}$, this
is the standard coordinate frame.) We do not know if this is always
the case when $V$ is closed and $c_1(TV)=0$, but, as we show in this
section, this ``triviality'' of $\mu_1$ is quite a common occurrence.

Let $(V^{2n},\omega)$ be a closed symplectic manifold.  We focus on
the A-type indices and whenever the index $\mu_k$ is considered we
assume that $c_k(TV)=0$. Denote by $\Ham=\Ham(V,\omega)$ the group of
the Hamiltonian diffeomorphisms of $V$. As is well known, $\mu_1=0$ on
$\pi_1(\Ham)$. This is an easy consequence of Arnold's
conjecture. Indeed, arguing by contradiction, assume that
$\mu_1(\varphi)\neq 0$ for some loop $\varphi$ in $\Ham$. Composing a
high iteration of $\varphi$ with the flow of a $C^2$-small autonomous
Hamiltonian, we obtain a path in $\Ham$ having no one-periodic orbits
with Conley--Zehnder index in the range $[-n,n]$, which is impossible
by Arnold's conjecture; see, e.g., \cite{Sa:LN} and references
therein.

Furthermore, recall that the flux map $\Phi$ associates to an element
$\varphi$ in the universal covering of $\Symp=\Symp(V,\omega)$ a
cohomology class $\Phi(\varphi)\in H^1(V;\R)$.  The value of this
class on the homology class of a loop $\gamma$ is equal to the
symplectic area of the cylinder swept by $\gamma$ under $\varphi$.
This map gives rise to an isomorphism between the quotients
$\Symp/\Ham$ and $H^1(V;\R)/\Gamma$ where the group
$\Gamma=\Phi(\pi_1(\Ham))$ is discrete; see \cite{Ba,LMP,On}. We
conclude that $\mu_1$ descends from $\pi_1(\Symp)$ to
$\pi_1(\Symp/\Ham)=\Gamma$ and vanishes on $\pi_1(\Symp)$ if and only
if it vanishes on $\Gamma$. Moreover, $\mu_k=0$ on $\pi_k(\Symp)$ for
$k\geq 2$ if and only if $\mu_k=0$ on $\pi_k(\Ham)$. (As has been
mentioned above, we do not have any example where this index with
$k\geq 2$ is non-trivial for any choice of the frame. It would be
extremely interesting to find such an example or show that in general
$\mu_k=0$ for $k\geq 2$.)

We are now in a position to state the vanishing results for
$\mu_1$. The key is the following, fairly standard, observation.

\begin{proposition}
Assume that $c_1(TV)=0$. Then
\begin{itemize}
\item[(1)] if $\omega|_{\pi_2(V)}=0$, every loop $\varphi$ in $\Symp$ with
  contractible orbits in $V$ has zero flux;

\item[(2)] if $V$ is Lefschetz, i.e., the multiplication by
  $[\omega]^{n-1}$ is an isomorphism $H^1(V;\R) \to H^{2n-1}(V;\R)$,
  every loop $\varphi$ in $\Symp$ with orbits homologous to zero in $V$ over
  $\R$ has zero flux.
\end{itemize}
\end{proposition}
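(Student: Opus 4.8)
The plan is to treat the two statements separately, in both cases reducing the vanishing of the flux to a computation with the generating vector field. Throughout I write $\varphi=\{\varphi_t\}_{t\in\S^1}$ for the loop, generated by the time-dependent vector field $X_t$ with $\iota_{X_t}\omega=\alpha_t$ a closed $1$-form, so that the flux is the class $u=\int_0^1[\alpha_t]\,dt\in H^1(V;\R)$, and I denote by $\ell_x(t)=\varphi_t(x)$ the orbit of a point $x$. Recall that pairing $u$ with a loop $\gamma$ gives the symplectic area of the torus $T_\gamma(t,s)=\varphi_t(\gamma(s))$, i.e. $\langle u,[\gamma]\rangle=\int_{T_\gamma}\omega$; since the flux is linear, it suffices in each case to control this over all loops $\gamma$, respectively to kill $u$ after a duality argument.

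For (1) I would show directly that $\int_{T_\gamma}\omega=0$ for every loop $\gamma$, which forces $u=0$. The key point is purely topological: since the orbit $\ell_{\gamma(s_0)}$ of a single point is freely nullhomotopic, I can homotope $T_\gamma$ so as to collapse the circle $\S^1_t\times\{s_0\}$ to a point. This factors $T_\gamma$ through $T^2/(\S^1_t\times\{s_0\})\simeq \S^1\vee \S^2$, and since the collapsing map has degree $\pm1$ on $H_2$, the fundamental class $(T_\gamma)_*[T^2]\in H_2(V)$ is spherical, i.e. lies in the image of the Hurewicz map $\pi_2(V)\to H_2(V)$. The hypothesis $\omega|_{\pi_2(V)}=0$ then gives $\int_{T_\gamma}\omega=0$. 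This collapsing step is the only subtle point in (1), and it uses the contractibility of just one orbit.

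For (2) the flux $u$ lives in $H^1(V;\R)$, and the Lefschetz hypothesis makes $\cup[\omega]^{n-1}\colon H^1(V;\R)\to H^{2n-1}(V;\R)$ injective, so it suffices to prove $u\cup[\omega]^{n-1}=0$. Because the cup-product pairing $H^1(V;\R)\times H^{2n-1}(V;\R)\to H^{2n}(V;\R)\cong\R$ is nondegenerate by Poincaré duality, this reduces to showing $\int_0^1\int_V\beta\wedge\alpha_t\wedge\omega^{n-1}\,dt=0$ for every closed $1$-form $\beta$. Using $\alpha_t=\iota_{X_t}\omega$ together with the algebraic identities $\iota_{X_t}\omega\wedge\omega^{n-1}=\tfrac1n\,\iota_{X_t}\omega^n$ and $\beta\wedge\iota_{X_t}\omega^n=\beta(X_t)\,\omega^n$, the integrand becomes $\tfrac1n\,\beta(X_t)\,\omega^n$. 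The crucial step is then a change of variables: since each $\varphi_t$ preserves the volume form $\omega^n$, one has $\int_V\beta(X_t)\,\omega^n=\int_V\beta\big(X_t(\varphi_t(x))\big)\,\omega^n(x)$, so after exchanging the order of integration the inner integral $\int_0^1\beta\big(X_t(\varphi_t(x))\big)\,dt=\langle\beta,[\ell_x]\rangle$ is exactly the pairing of $\beta$ with the orbit class, which vanishes because the orbits are nullhomologous over $\R$. I expect this change of variables -- recognizing the $\omega^{n-1}$-paired flux as the averaged orbit (asymptotic) cycle -- to be the main obstacle; once it is in place, injectivity of $\cup[\omega]^{n-1}$ yields $u=0$. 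I note that neither argument actually invokes $c_1(TV)=0$, which enters only in relating the flux to $\mu_1$.
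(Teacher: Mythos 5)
Your proof is correct and follows essentially the same route as the paper: in (1) your collapsing of the nullhomotopic orbit circle to produce a spherical class is the rigorous form of the paper's operation of capping the swept torus twice (with opposite orientations) by a disk bounding that orbit, and in (2) your Cartan-calculus and change-of-variables computation is exactly a proof of the fact the paper merely recalls, namely that the orbit class is Poincar\'e dual to $\Phi(\varphi)\wedge[\omega]^{n-1}/\mathit{vol}(V)$, after which both arguments invoke injectivity of $\cup\,[\omega]^{n-1}$. The only difference is one of self-containedness: you derive the duality formula rather than cite it.
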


\begin{proof}
  In (1), the flux $\Phi(\varphi)(\gamma)$ is the integral of
  $\omega$ over the torus swept by $\gamma$ under the flow of the
  loop. The loop swept by $\gamma(0)$ in $V$ is contractible. Let us
  attach (twice, with opposite orientations) a disk bounded by this
  loop to the torus. The integral of $\omega$ over the resulting
  sphere is equal to the integral over the torus (the flux) and is
  zero since $\omega$ vanishes on $\pi_2(V)$.

  To deal with (2), recall that the homology class of an orbit (over
  $\R$) is Poincar\'e dual to
  $\Phi(\varphi)\wedge[\omega]^{n-1}/\mathit{vol}(V)$. Thus, under the
  Lefschetz condition, the orbit is homologous to zero if and only if
  it has zero flux.
\end{proof}

\begin{corollary}
  In the setting of (1) or (2), $\varphi$ is homotopic to a loop in
  $\Ham$, and hence $\mu_1(\varphi)=0$.
\end{corollary}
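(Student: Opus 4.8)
The plan is to obtain the homotopy assertion directly from the flux exact sequence, and then read off the vanishing of $\mu_1$ from its homotopy invariance together with the already-recorded fact that $\mu_1=0$ on $\pi_1(\Ham)$. First I would record the output of the Proposition: in either setting (1) or (2) the loop $\varphi$ has zero flux, i.e. $\Phi(\varphi)=0\in H^1(V;\R)$, and in particular $\Phi(\varphi)=0$ in the discrete flux group $\Gamma\subset H^1(V;\R)$.

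Next I would set up the flux fibration. Since $\Ham$ is a normal subgroup of $\Symp=\Symp(V,\omega)$ whose quotient is realized by the flux as $\Symp/\Ham\cong H^1(V;\R)/\Gamma$, and since $\Gamma$ is discrete (the Flux Conjecture, as recalled above via \cite{Ba,LMP,On}), one has the fibration
$$\Ham\longrightarrow\Symp\stackrel{\Phi}{\longrightarrow}H^1(V;\R)/\Gamma.$$
The base is an abelian Lie group with contractible universal cover $H^1(V;\R)$, so $\pi_1\left(H^1(V;\R)/\Gamma\right)\cong\Gamma$ and the map $\pi_1(\Phi)\colon\pi_1(\Symp)\to\Gamma$ is exactly the flux of a loop. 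Exactness of the homotopy sequence at $\pi_1(\Symp)$,
$$\pi_1(\Ham)\stackrel{\pi_1(\iota)}{\longrightarrow}\pi_1(\Symp)\stackrel{\pi_1(\Phi)}{\longrightarrow}\Gamma,$$
identifies $\ker\pi_1(\Phi)$ with $\im\pi_1(\iota)$; that is, a loop in $\Symp$ has vanishing flux if and only if it is homotopic to a loop contained in $\Ham$. Applying this to $\varphi$, whose flux vanishes by the Proposition, produces a loop $\psi$ in $\Ham$ with $\pi_1(\iota)([\psi])=[\varphi]$, so $\varphi$ is homotopic in $\Symp$ to $\psi\subset\Ham$; this is the first assertion.

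For the second assertion I would use that $\mu_1$ is an invariant of the homotopy class in $\pi_1(\Symp)$ and that it vanishes on the image of $\pi_1(\Ham)$, as recalled above as an easy consequence of Arnold's conjecture. Hence $\mu_1(\varphi)=\mu_1(\psi)=0$. The argument is almost entirely formal, the substantive inputs being external: the discreteness of $\Gamma$, which is what guarantees the base of the fibration has the homotopy type making the $\pi_1$-identification valid, and the prior vanishing of $\mu_1$ on $\pi_1(\Ham)$. The one point I would check carefully is the identification of the projection-induced map $\pi_1(\Phi)$ with the geometric flux appearing in the Proposition, so that ``zero flux'' translates correctly into membership in $\ker\pi_1(\Phi)$; once that is granted, the conclusion follows by exactness.
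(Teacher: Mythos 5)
Your proof is correct and takes essentially the same route as the paper: zero flux from the Proposition, the Banyaga/Ono flux fibration $\Ham\longrightarrow\Symp\longrightarrow H^1(V;\R)/\Gamma$ (with $\Gamma$ discrete) to convert zero flux into a homotopy of $\varphi$ into $\Ham$, and the previously recorded vanishing of $\mu_1$ on $\pi_1(\Ham)$ via Arnold's conjecture. The only difference is explicitness: the paper's proof is a one-liner that leaves the exact-sequence step implicit in its preceding flux discussion, whereas you spell it out.
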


\begin{proof}
  We already know that $\mu_1=0$ on $\pi_1(\Ham)$,
  and hence $\mu_1(\varphi)=0$ when $\varphi$ is homotopic to a loop
  in $\Ham$.
\end{proof}

\begin{corollary}[Triviality]
  Assume that the evaluation map is onto $\pi_1(V)$ when
  $\omega|_{\pi_2(V)}=0$ or that it is onto $H_1(V;\Z)/\mathit{Tors}$
  if $V$ is Lefschetz. Then $\mu_1$ can be made identically zero by a
  suitable choice of trivialization.
\end{corollary}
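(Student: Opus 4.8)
The plan is to use that $\mu_1$ is built from a choice of unitary frame $\lambda$ of $TV$ along the $2$--skeleton $V^{(2)}$, to track precisely how $\mu_1$ changes under a change of frame, and to show that this dependence is a torsor under $H^1(V;\Z)$. Combined with the vanishing supplied by the previous corollary and the surjectivity hypothesis, this will let me solve for a frame on which $\mu_1$ vanishes identically.

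First I would make the frame--dependence explicit. For $k=1$ the frame $\lambda=\{e_1,\dots,e_n\}$ is a full unitary trivialization of $TV|_{V^{(2)}}$, which exists because $c_1(TV)=0$. Unwinding the construction of $\vartheta$ in \eqref{eq:maslov}, the integer $\mu_1^{\lambda}([\varphi])$ is the winding number, along the evaluation circle $S=(ev_p\circ\varphi)(\S^1)$, of the determinant of the transition function comparing the reference frame $\lambda|_S$ with the push--forward frame $\varphi_*\lambda$; this is the framed, nonlinear analogue of the linear identity $\mu_1=\det_*\colon\pi_1(U(n))\to\pi_1(\S^1)$ recorded in Subsection \ref{ssec:algebraic}. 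A second trivialization has the form $\lambda'=g\cdot\lambda$ for a map $g\colon V^{(2)}\to U(n)$, and every class in $H^1(V;\Z)\cong[V^{(2)},\S^1]$ is realised as $[\det\circ g]$, since $\det\colon U(n)\to\S^1$ is a $\pi_1$--isomorphism and $SU(n)$ is $2$--connected, so the lifting obstructions over $V^{(2)}$ vanish. Replacing $\lambda$ by $\lambda'$ multiplies the transition function by $g|_S$, whence
$$\mu_1^{\lambda'}([\varphi])=\mu_1^{\lambda}([\varphi])+\la\,[\det\circ g],\,E([\varphi])\,\ra,$$
where $E\colon\pi_1(\Symp)\to H_1(V;\Z)$ is the composition of $\pi_1(ev_p)$ with the Hurewicz map. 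Identifying the correction term with this pairing against $E([\varphi])$ is the \emph{main obstacle}, because it requires reading off the determinantal part of the construction of $\vartheta$; once it is in place the remainder is formal.

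Next I would show that $\mu_1^{\lambda}$ factors through $E$ modulo torsion. In case (2) the previous corollary shows that a loop $\varphi$ whose orbit is homologically trivial over $\R$, i.e.\ with $E([\varphi])$ torsion, is homotopic into $\Ham$ and hence satisfies $\mu_1^{\lambda}([\varphi])=0$; thus $\mu_1^{\lambda}$ vanishes on $\ker\big(\pi_1(\Symp)\to H_1(V;\Z)/\mathit{Tors}\big)$ and descends to a homomorphism $\bar\mu\colon\im(E)\to\Z$. In case (1) the same corollary gives $\mu_1^{\lambda}=0$ on $\ker\big(\pi_1(ev_p)\colon\pi_1(\Symp)\to\pi_1(V)\big)$, so $\mu_1^{\lambda}$ descends to the image of $\pi_1(\Symp)$ in $\pi_1(V)$ and, being $\Z$--valued, further through $H_1(V;\Z)/\mathit{Tors}$.

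Finally I would invoke the surjectivity hypothesis. When $ev_p$ is onto $\pi_1(V)$ in case (1), or $E$ is onto $H_1(V;\Z)/\mathit{Tors}$ in case (2), the induced homomorphism $\bar\mu$ is defined on all of $H_1(V;\Z)/\mathit{Tors}$ and therefore equals pairing with a class $\beta\in\Hom(H_1(V;\Z)/\mathit{Tors},\Z)=H^1(V;\Z)$, so that $\mu_1^{\lambda}([\varphi])=\la\beta,E([\varphi])\ra$ for every $[\varphi]$. Choosing $g\colon V^{(2)}\to U(n)$ with $[\det\circ g]=-\beta$ and setting $\lambda'=g\cdot\lambda$, the frame--dependence formula yields $\mu_1^{\lambda'}\equiv 0$, which is the asserted triviality.
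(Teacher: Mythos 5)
Your proof is correct and takes essentially the same route as the paper's: the paper's two-sentence proof rests precisely on your frame--change formula --- that altering the trivialization adjusts $\mu_1$ by an arbitrary homomorphism $\pi_1(V)\to\Z$ (resp. $H_1(V;\Z)/\mathit{Tors}\to\Z$) pulled back through the evaluation map --- and then concludes by the same cancellation you perform. Your write-up merely supplies the details the paper leaves implicit: the determinant/transition-function computation behind the frame-change formula, the descent of $\mu_1$ through the evaluation map via the preceding corollary, and the use of the surjectivity hypothesis to realize $\mu_1$ as pairing with an integral class that a frame change can kill.
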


\begin{proof}
  By changing the frame, one can adjust $\mu_1$ by an arbitrarily
  homeomorphism $\pi_1(V)\to \Z$ or $H_1(M;\Z)/\mathit{Tors}\to \Z$. The result
  follows. 
\end{proof}

\section{Homogeneous Indices}\label{sec:homogeneous}
In this Section we introduce a second family of indices. These are
maps from the space of contact or symplectic structures instead of
their groups of transformations. The target spaces of these maps will
be the homotopy groups of the space of almost complex structures. The
basic facts about this space are stated in Subsection
\ref{ssec:acs}. Then the maps are defined in Subsection
\ref{ssec:homind}. These homogeneous indices are quite simple and make
a natural partner for the Maslov indices defined in Section
\ref{sec:maslov} and are worth to be treated in a systematic fashion. 

Consider a smooth manifold $X$, it will denote either a symplectic manifold $(V,\omega)$ or a contact manifold $(M,\xi)$. Let $G(X)$ denote either $\Symp(V,\omega)$ or $\Cont(M,\xi)$ intersected with $\Diff_0(X)$\footnote{Remark that $G(X)$ may not be connected.}. Define the spaces
$$\SS(V,\omega)=\{\eta\in\Omega^2(V): [\omega]=[\eta]\mbox{ and }\eta\mbox{ is symplectic}\},$$
$$\SC(M,\xi)=\{\SD:\SD\mbox{ is a contact structure isotopic to }\xi\}.$$
Denote the connected component containing either $\omega$ or $\xi$ of these spaces by $\SK_0(X)$. The parametric version of theorems of J. Moser and J.W. Gray \cite{Gr,Mo} implies the following result.
\begin{lemma}\label{lem:serre}
The group $\Diff_0(X)$ acts transitively on $\SK_0(X)$ with stabilizer isomorphic to $G(X)$. There exists a Serre fibration
$$G(X)\longrightarrow \Diff_0(X)\stackrel{\pi}{\longrightarrow}\SK_0(X)\mbox{ where }\pi(\varphi)=\varphi^*\omega\mbox{ or }\varphi_*\xi.$$
\end{lemma}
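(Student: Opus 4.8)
The plan is to exhibit $\pi$ as the orbit map of a transitive action of $\Diff_0(X)$ whose point-stabilizer is $G(X)$, and then to promote this to a Serre fibration by producing continuous local sections of $\pi$ via the \emph{parametric} Moser and Gray theorems. First I would fix the action: in the symplectic case $\Diff_0(V)$ acts on $2$-forms by $\varphi\cdot\eta=(\varphi^{-1})^*\eta$, and in the contact case on plane fields by $\varphi\cdot\SD=\varphi_*\SD$, so that $\pi(\varphi)=\varphi^*\omega$ (resp. $\varphi_*\xi$) is exactly the orbit of the base point $\omega$ (resp. $\xi$). The stabilizer of $\omega$ is $\{\varphi\in\Diff_0(V):\varphi^*\omega=\omega\}=\Symp(V,\omega)\cap\Diff_0(V)=G(V)$, and likewise the stabilizer of $\xi$ is $G(M)$; this is immediate from the definitions and identifies the fibers of $\pi$ with the cosets of $G(X)$.

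Next I would prove transitivity onto $\SK_0(X)$ from the ordinary (non-parametric) Moser and Gray theorems, assuming $X$ closed so that these apply. Any $\eta\in\SK_0(V,\omega)$ lies in the connected component of $\omega$, hence is joined to $\omega$ by a path $\eta_t$ inside $\SS(V,\omega)$; by definition of $\SS$ every $\eta_t$ is symplectic with $[\eta_t]=[\omega]$, so the path has constant cohomology class. Moser's theorem integrates it to an isotopy $\varphi_t\in\Diff_0(V)$ with $\varphi_t^*\eta_t=\omega$, and since $\varphi_1^*\eta=\omega$ we get $\pi(\varphi_1^{-1})=\eta$ with $\varphi_1^{-1}\in\Diff_0(V)$. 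In the contact case a path in $\SK_0(M,\xi)$ is a smooth family $\SD_t$ of contact structures, and Gray stability produces $\varphi_t\in\Diff_0(M)$ with $\varphi_{t*}\xi=\SD_t$, so $\pi(\varphi_1)=\SD_1$. In both cases the straightening diffeomorphism is the time-one flow of the Moser/Gray vector field, hence lies in the identity component.

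Finally I would establish the fibration property, where the parametric theorems are essential. The key step is a continuous local section of $\pi$. Working near $\omega$, for $\eta$ in a small neighborhood $\U\subset\SK_0(V,\omega)$ the linear path $t\mapsto(1-t)\omega+t\eta$ stays symplectic and cohomologous; choosing a primitive of $\eta-\omega$ continuously in $\eta$ (via a Hodge-theoretic, hence linear, splitting of $d$) and integrating the associated Moser vector field yields $\psi_\eta$ with $\psi_\eta^*\eta=\omega$ depending continuously on $\eta$, so $\sigma(\eta):=\psi_\eta^{-1}$ is a local section. Transitivity left-translates $\sigma$ to sections near every point, and the contact case is identical with the Gray vector field. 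With local sections in hand the standard mechanism applies: a topological group with a closed subgroup whose quotient projection admits local sections is a locally trivial fiber bundle, hence a Serre fibration. Equivalently, one verifies the homotopy lifting property directly: given a homotopy $\eta_{s,t}$ ($s\in D^n$, $t\in[0,1]$) into $\SK_0(X)$ and an initial lift $\psi_s$ with $\psi_s^*\omega=\eta_{s,0}$, parametric Moser/Gray with $s$ as parameter and $t$ as straightening time produces $\theta_{s,t}$ with $\theta_{s,0}=\mathrm{id}$ and $\theta_{s,t}^*\eta_{s,t}=\eta_{s,0}$, and then $\psi_s\circ\theta_{s,t}^{-1}$ is the required lift.

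I expect the main obstacle to be the analytic content hidden in the local sections, namely making the Moser/Gray construction depend continuously on parameters in the Fréchet category of $\Diff_0(X)$ and $\SK_0(X)$. Concretely, one must select the primitive of $\eta-\omega$ (or the contact datum in Gray's equation) continuously in the parameter, which uses a continuous linear right inverse to $d$ such as the Hodge Green operator, and then invoke continuous dependence of ODE flows on parameters to conclude that the section is continuous into $\Diff_0(X)$. The transitivity and stabilizer computations are formal once Moser and Gray are granted; the genuine work, and the reason the parametric theorems are invoked in place of their pointwise versions, lies entirely in this parametric regularity.
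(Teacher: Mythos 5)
Your proof is correct and follows exactly the route the paper intends: the paper gives no argument beyond citing the parametric Moser and Gray theorems, and your construction of continuous local sections of $\pi$ (Hodge-theoretic choice of primitive, integration of the Moser/Gray vector field) followed by the standard local-triviality mechanism for transitive actions with local sections is precisely the standard elaboration of that citation. One small caution: in your alternative direct verification of the homotopy lifting property, the homotopy $\eta_{s,t}$ is a priori only continuous in $(s,t)$, so the Moser/Gray equation cannot be differentiated in $t$ as stated; the local-sections argument you give first avoids this issue and should be taken as the actual proof.
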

Given the knowledge of the homotopy type of $\Diff_0(X)$ the study of
the homotopy groups of the spaces $G(X)$ or those of $\SK_0(X)$ is
equivalent. In particular, one can extract information about $G(X)$
through the study of $\SK_0(X)$. For instance, this has been exploited
in \cite{AM,Bu,Kr,Mc} for the symplectic case and studied in
\cite{CP,GK,GG1} in the contact case. The space $\SK_0(X)$ can be
analyzed through the evaluation map at a point with the use of the
space of linear almost complex structures. The following Subsection
gives a description of the space of linear complex structures.
\subsection{Almost complex structures}\label{ssec:acs} Let $E$ be a $2n$--dimensional real vector space. Consider the space of linear complex structures
$$\SA_l(E)=\{j\in\End(V):j^2=-1\}.$$
Let us fix a linear isomorphism of the vector space $E$ with
$\R^{2n}$. The special orthogonal group $SO(2n)$ acts transitively on $\SA_l(\R^{2n})$ with stabilizer group isomorphic to $U(n)$. This gives a diffeomorphism $\SA_l(E)\cong SO(2n)/U(n)$.
These homogeneous spaces have a well--understood geometry. In
low--dimensions their smooth description is quite simple:
$SO(4)/U(2)\cong\S^2$, $SO(6)/U(3)\cong\CP^3$ and $SO(8)/U(4)\cong
\Gr^+_2(\R^8)$. In general, their stable homotopy groups can be
computed by using the results of R. Bott, \cite{Bo}.

\begin{lemma}\label{lem:homot} Let $\SA_{l}(\R^{2n})$ be the space of linear complex structures.\\
In the stable range $1\leq k\leq 2n-2$, the homotopy groups are
$$\pi_k(\SA_{l}(\R^{2n})) = \begin{cases} \Z_2 &\mbox{if } k \equiv 0,7 \pmod{8},\\
\Z & \mbox{if } k \equiv 2 \pmod{4},\\
0 & \mbox{otherwise}.
\end{cases}$$
The inclusion $\SA_{l}(\R^{2n})\longrightarrow \SA_{l}(\R^{2n+2})$ is an isomorphism of the stable homotopy groups.
\end{lemma}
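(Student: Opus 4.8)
The plan is to identify $SO(2n)/U(n)$ with one of the eight spaces occurring in Bott's chain of symmetric spaces and to read off its homotopy from the $8$--periodicity of the stable orthogonal group $O=\varinjlim O(m)$. Accordingly, the argument splits into a stability step (which also pins down the stable range) and a computation step that invokes Bott periodicity as a black box.

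First I would prove the stability assertion. The inclusion $SO(2n)/U(n)\to SO(2n+2)/U(n+1)$ fits into a map between the fibration sequences $U(n)\to SO(2n)\to SO(2n)/U(n)$ and $U(n+1)\to SO(2n+2)\to SO(2n+2)/U(n+1)$. The inclusion $U(n)\hookrightarrow U(n+1)$ has fiber $U(n+1)/U(n)\cong\S^{2n+1}$, so it induces isomorphisms on $\pi_k$ for $k\leq 2n-1$ and a surjection for $k=2n$; the inclusion $SO(2n)\hookrightarrow SO(2n+2)$ has fiber the Stiefel manifold $\V_2(\R^{2n+2})$, which is $(2n-1)$--connected, so it induces isomorphisms on $\pi_k$ for $k\leq 2n-2$ and a surjection for $k=2n-1$. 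Feeding these into the five lemma applied to the two long exact homotopy sequences shows that $\pi_k(SO(2n)/U(n))\to\pi_k(SO(2n+2)/U(n+1))$ is an isomorphism for $k\leq 2n-2$. This is exactly the claimed stable range and establishes the last sentence of the lemma; passing to the colimit then identifies $\pi_k(SO(2n)/U(n))$ with $\pi_k(O/U)$ in that range, where $O/U=\varinjlim SO(2n)/U(n)$.

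It remains to compute $\pi_k(O/U)$, and here I would invoke Bott's homotopy equivalence $\Omega O\simeq O/U$, one of the equivalences in the periodicity sequence of the infinite orthogonal group. This yields $\pi_k(O/U)\cong\pi_{k+1}(O)$. Substituting the $8$--periodic values $\pi_j(O)=\Z_2,\Z_2,0,\Z,0,0,0,\Z$ for $j\equiv 0,\ldots,7\pmod 8$ with $j=k+1$ produces $\Z_2$ when $k\equiv 0,7\pmod 8$, $\Z$ when $k\equiv 2,6\pmod 8$ (equivalently $k\equiv 2\pmod 4$), and $0$ otherwise, which is the table in the statement. The lower bound $k\geq 1$ merely sidesteps the $\pi_0$ discrepancy between using $SO(2n)$ and $O(2n)$, the two components of the latter parametrizing the complex structures compatible with each of the two orientations. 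The only delicate points are bookkeeping ones, namely tracking the connectivities of the two Stiefel fibers so that the five lemma returns precisely the range $k\leq 2n-2$, and orienting the index shift in $\Omega O\simeq O/U$ so that the residues mod $8$ land correctly; the substantive input, Bott periodicity, is cited rather than reproved.
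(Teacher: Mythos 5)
Your proof is correct, and its computational core coincides with the paper's: both invoke Bott periodicity in the form $\Omega O\simeq O/U$ together with the eightfold table of $\pi_*(O)$, and shift degree by one via $\pi_*(\Omega X)\cong\pi_{*+1}(X)$ to obtain the stated groups. The difference is in the stabilization step. The paper disposes of it in one line by asserting that the ``homotopy cofiber'' of $\SA_{l}(\R^{2n})\to\SA_{l}(\R^{2n+2})$ is $\S^{2n}$; what underlies this (loosely phrased) claim is the fiber bundle $\SA_{l}(\R^{2n})\to\SA_{l}(\R^{2n+2})\to\S^{2n}$ sending $J\mapsto Je_1$, whose fiber is the space of compatible complex structures on $\langle e_1,Je_1\rangle^{\perp}$; the long exact sequence of this fibration gives the isomorphism range $k\leq 2n-2$ immediately, since $\pi_k(\S^{2n})=\pi_{k+1}(\S^{2n})=0$ there. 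You instead compare the two fibrations $U(n)\to SO(2n)\to\SA_{l}(\R^{2n})$ and $U(n+1)\to SO(2n+2)\to\SA_{l}(\R^{2n+2})$ and run the five lemma, feeding in the connectivity of $\S^{2n+1}\cong U(n+1)/U(n)$ and of $\V_2(\R^{2n+2})\cong SO(2n+2)/SO(2n)$; your bookkeeping is right and returns exactly the range $k\leq 2n-2$, in agreement with the statement. Your route is slightly longer, but it is correct as written (unlike the paper's ``cofiber'' phrasing, which conflates the base of a fibration with a cofiber), it makes the stable range transparent as the minimum of the relevant connectivities, and your remark about the $\pi_0$ discrepancy between $SO(2n)/U(n)$ and $O(2n)/U(n)$ handles a point the paper leaves implicit. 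The paper's fibration over $\S^{2n}$ buys brevity and a geometric identification of where stability first fails.
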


\begin{proof}
  This is a consequence of the Bott Periodicity Theorem which asserts
  the homotopy equivalence $\Omega O \simeq O/U$ and provides the
  stable homotopy groups of $O$. The shift in the degree with respect
  to $\pi_*(O)$ is caused by the general isomorphism
  $\pi_*(\Omega X)\cong\pi_{*+1}(X)$ provided by the path
  fibration. This gives the groups in the statement.

The homotopy cofiber of the map $\SA_{l}(\R^{2n})\longrightarrow \SA_{l}(\R^{2n+2})$ is diffeomorphic to $\S^{2n}$, hence it an isomorphism in the stable homotopy groups.
\end{proof}

\noindent The homogeneous indices take values in the homotopy groups
of the homogeneous spaces $\SA_l(E)$. The previous Lemma gives a
simple description for the cases in the stable range. For instance,
the index landing in the second homotopy group
$\pi_2(\SA_l(E))\cong\Z$ is integer valued.

\noindent The vector space $E$ that is used to define the homogeneous indices will be the tangent space of a manifold at a point. The tangent space of a symplectic manifold is even dimensional and the previous description applies. A contact manifold is odd--dimensional and there exists no linear complex structure on an odd--dimensional real vector space. This can be fixed as follows.

Let $F$ be a ($2n+1$)--dimensional vector space with an inner product $(\cdot,\cdot)$. One can have a linear complex structure either by considering a hyperplane $H$ and a linear complex structure on it or artificially add a one--dimensional vector space $L$ and endow $F\oplus L$ with a linear complex structure. Both options lead to the notion of a linear almost contact structure. The former description leads to the following definition for the space of linear almost contact structure:
$$\SA_l(F):=\{H\in\Gr_{2n}^+(F),j\in\End(H):j^2=-1\}.$$
This naturally gives a complex structure on $F\oplus L$ by declaring $H^\perp\oplus L\cong\C$. The space $\SA_l(F)$ can be identified with the homogeneous space $SO(2n+1)/U(n)$. It has a nice description in terms of linear almost complex structures on $F\oplus L$:

\begin{lemma}\label{lem:iso}
There exists a diffeomorphism $\SA_l(\R^{2n+1})\cong\SA_l(\R^{2n+2})$.
\end{lemma}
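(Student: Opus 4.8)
The plan is to make explicit the correspondence foreshadowed by the formula $H^\perp\oplus L\cong\C$ in the text preceding the statement, and to verify it is a diffeomorphism by exhibiting an inverse. Throughout I fix an inner product and an orientation on $\R^{2n+2}$, a unit vector $e$ spanning the auxiliary line $L$, and identify $F=\R^{2n+1}$ with $L^\perp=e^\perp$, so that $\R^{2n+2}=F\oplus L$. Using the homogeneous--space identifications recorded above, I regard $\SA_l(\R^{2n+1})\cong SO(2n+1)/U(n)$ as the space of pairs $(H,j)$ with $H\in\Gr^+_{2n}(F)$ and $j$ a complex structure on $H$ inducing the given orientation of $H$, and $\SA_l(\R^{2n+2})\cong SO(2n+2)/U(n+1)$ as the space of orthogonal, orientation--compatible complex structures $J$ on $\R^{2n+2}$.

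First I would define the forward map $\Psi:\SA_l(\R^{2n+1})\longrightarrow\SA_l(\R^{2n+2})$. Given $(H,j)$, let $H^\perp$ be the one--dimensional orthogonal complement of $H$ inside $F$, so that $H^\perp\oplus L$ is a $2$--plane. I orient this plane by the rule that the complex orientation of $H$ induced by $j$, followed by the chosen orientation of $H^\perp\oplus L$, recovers the fixed orientation of $\R^{2n+2}$; the resulting oriented $2$--plane carries its canonical rotation--by--$90^\circ$ complex structure, which is the content of $H^\perp\oplus L\cong\C$. Declaring $\Psi(H,j)$ to equal $j$ on $H$ and this canonical rotation on $H^\perp\oplus L$ produces an orthogonal complex structure $J$ that, by construction, induces the fixed orientation, so $J\in\SA_l(\R^{2n+2})$. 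Smoothness is immediate since every step is algebraic in $(H,j)$.

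Next I would build the inverse $\Phi$. An orthogonal $J$ with $J^2=-1$ is skew--adjoint, so $\la Je,e\ra=0$, whence $Je\in e^\perp=F$ and $|Je|=1$. Setting $H:=\{e,Je\}^\perp$ (orthogonal complement in $\R^{2n+2}$), the plane $H$ has dimension $2n$, lies in $F$, has $F$--orthogonal complement $\operatorname{span}\{Je\}$, and is preserved by $J$ because $J$ preserves $\operatorname{span}\{e,Je\}$. Thus $j:=J|_H$ is a complex structure, and I orient $H$ by its complex orientation; this defines $\Phi(J)=(H,j)$, again manifestly smooth. I would then check $\Phi\circ\Psi=\Id$ and $\Psi\circ\Phi=\Id$: the only point with content is that the rotation $e\mapsto Je\mapsto -e$ of $J$ on $\operatorname{span}\{e,Je\}$ agrees with the canonical rotation $\Psi$ places on the oriented plane $H^\perp\oplus L$, which is exactly what the orientation convention in $\Psi$ guarantees.

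The main point requiring care --- rather than any real difficulty --- is precisely this orientation bookkeeping: the orientation of $H^\perp\oplus L$ must be fixed so that $\Psi$ lands in the orientation--compatible component $SO(2n+2)/U(n+1)$ and inverts $\Phi$ on the nose, rather than differing by a sign that would replace $J$ by $-J$ on the auxiliary plane. A cleaner, coordinate--free alternative that sidesteps explicit formulas is to observe that the subgroup $SO(2n+1)=SO(F)\hookrightarrow SO(2n+2)$, acting on $F=e^\perp$ and fixing $e$, already acts transitively on $\SA_l(\R^{2n+2})$: for any $J$ the group $SO(F)$ moves the unit vector $Je\in F$ to any prescribed one, after which the residual $SO(2n)$ acts transitively on the orientation--compatible complex structures of the remaining $2n$--plane. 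A direct computation shows the stabilizer of a base point is exactly $U(n)$, giving $\SA_l(\R^{2n+2})\cong SO(2n+1)/U(n)\cong\SA_l(\R^{2n+1})$ and reproving the lemma in the homogeneous--space language.
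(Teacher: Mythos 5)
Your proof is correct and matches the paper's intent: the paper gives no argument at all beyond declaring the lemma ``an exercise in the theory of homogeneous spaces'' (citing Geiges), and your closing argument --- transitivity of the $SO(2n+1)$-action on $SO(2n+2)/U(n+1)$, with stabilizer of a base point computed to be $U(n)$ --- is precisely that exercise carried out. Your explicit maps $\Psi$ and $\Phi$, with the orientation bookkeeping done correctly, are likewise a faithful implementation of the correspondence $H^\perp\oplus L\cong\C$ sketched in the paper just before the statement, so both of your routes are sound.
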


\noindent The proof is an exercise in the theory of homogeneous spaces \cite{Ge}. In particular Lemmas \ref{lem:homot} and \ref{lem:iso} allow to compute the stable homotopy groups of $\SA_l(F)$.
\subsection{Definition of the indices}\label{ssec:homind} Let us define the homogeneous indices. Denote by $\SA(X)$ either of the two spaces
$$
\SA(V)=\{\J\in\End(TV),\J^2=-\mbox{id}\}
\textrm{ or }\SA(M)=\{(\xi,\J):\xi\in Dist(M),\J\in\End(\xi),\J^2=-\mbox{id},\J\mbox{ positive}\}.$$
and by $\SA\SC(X)$ either of the two spaces
$$\SA\SC(V,\omega)=\{(\eta,\J):\eta\in\SS(V,\omega),\J\mbox{ is compatible with }\omega\},$$
$$\SA\SC(M, \xi)=\{(\eta,\J):\eta \in \SC(M, \xi),\J\in\End(\eta),\J^2=-\mbox{id}, \J \mbox{ compatible with }\alpha\mbox{ such that }\eta=\ker\alpha\}.$$

The forgetful projection $\SA\SC(X)\longrightarrow\SK(X)$ has contractible homotopy fibres and thus there exists a homotopy inverse $\i:\pi_*\SK(X)\longrightarrow\pi_*\SA\SC(X)$. Consider the composition $\tilde\i:\pi_*\SK(X)\longrightarrow\pi_*\SA(X)$ of this homotopy inverse $\i$ with the map induced by the projection $\rho:\SA\SC(X)\longrightarrow\SA(X)$.

\noindent Fix a point $p\in X$, an oriented frame $\tau:T_pX\stackrel{\simeq}{\longrightarrow}\R^{N}$ and define the evaluation map
$$ev_{(p,\tau)}:\SA(X)\longrightarrow\SA_l(\R^N),\quad ev_{(p,\tau)}(\J)=\tau_*\J.$$

\begin{definition}\label{def:homind}
The homogeneous index associated to $(p,\tau)$ is the map
$$\varepsilon_*=\pi_*ev_{(p,\tau)}\circ\tilde{\i}:\pi_*(\SK(X))\longrightarrow\pi_*(\SA_l(\R^N))$$
\end{definition}

The homogeneous indices do not depend on the choice of data $(p,\tau)$.
\begin{lemma}\label{lem:sympinvar}
The induced evaluation maps in homotopy
$$\varepsilon_*= \pi_*e_{(p,\tau)}\circ\tilde{\i}:\pi_*\SK(X)\longrightarrow\pi_*\SA_l(\R^N)$$
are independent of the choice of $(p,\tau)$.
\end{lemma}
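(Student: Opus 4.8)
The lemma claims the homogeneous index $\varepsilon_*$ is independent of the choice of basepoint $p \in X$ and oriented frame $\tau: T_pX \to \mathbb{R}^N$. The map factors as $\pi_* ev_{(p,\tau)} \circ \tilde{\imath}$, where $\tilde{\imath}$ is intrinsic (built from the contractible-fiber projection $\mathcal{AC}(X) \to \mathcal{K}(X)$ and the forgetful map $\rho$, neither of which involves $(p,\tau)$). So the entire dependence on $(p,\tau)$ sits in the evaluation $ev_{(p,\tau)}: \mathcal{A}(X) \to \mathcal{A}_l(\mathbb{R}^N)$. It therefore suffices to show that the induced map $\pi_* ev_{(p,\tau)}$ is independent of $(p,\tau)$.

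\textbf{Reduction.} Since the homotopy inverse $\i$ of the forgetful projection $\SA\SC(X)\to\SK(X)$ and the projection $\rho:\SA\SC(X)\to\SA(X)$ are intrinsic to $X$, the composite $\tilde\i=\pi_*\rho\circ\i$ carries no dependence on the auxiliary data $(p,\tau)$. Hence the entire dependence of $\varepsilon_*=\pi_*ev_{(p,\tau)}\circ\tilde\i$ on $(p,\tau)$ is concentrated in the single factor $\pi_*ev_{(p,\tau)}$, and it suffices to prove that the homotopy class of the evaluation map
$$ev_{(p,\tau)}:\SA(X)\longrightarrow\SA_l(\R^N)$$
is independent of the point $p\in X$ and the oriented frame $\tau:T_pX\stackrel{\simeq}{\longrightarrow}\R^N$. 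Throughout we use that $X$ is connected; otherwise the statement must be read componentwise.

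\textbf{A single evaluation over the frame bundle.} The plan is to realize every map $ev_{(p,\tau)}$ as a restriction of one continuous map defined over a connected space of frames. Let $\mathrm{Fr}^{+}(X)$ denote the bundle of oriented frames of $X$, the principal $\GL^+(N,\R)$--bundle whose fibre over $q\in X$ consists of the orientation--preserving isomorphisms $T_qX\stackrel{\simeq}{\longrightarrow}\R^N$, adapted to the co--oriented hyperplane field in the contact case. Define
$$EV:\SA(X)\times \mathrm{Fr}^{+}(X)\longrightarrow\SA_l(\R^N),\qquad EV(\J,(q,\sigma))=\sigma_*(\J_q),$$
where $\J_q$ denotes the value at $q$ of the pointwise almost complex (resp.\ almost contact) structure $\J$. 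By construction $EV(\,\cdot\,,(p,\tau))=ev_{(p,\tau)}$ for every admissible pair $(p,\tau)$.

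\textbf{Connectedness yields the homotopy.} First I would observe that $\mathrm{Fr}^{+}(X)$ is path--connected. Indeed $X$ is connected and the structure group $\GL^+(N,\R)$ is connected, so the long exact homotopy sequence of the fibration $\GL^+(N,\R)\to\mathrm{Fr}^{+}(X)\to X$ gives $\pi_0(\mathrm{Fr}^{+}(X))=0$. Given two choices $(p,\tau)$ and $(p',\tau')$, pick a path $\gamma:[0,1]\to\mathrm{Fr}^{+}(X)$ joining them; then $(\J,t)\mapsto EV(\J,\gamma(t))$ is a homotopy from $ev_{(p,\tau)}$ to $ev_{(p',\tau')}$, so these maps agree on every homotopy group. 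Composing with the fixed homomorphism $\tilde\i$ shows that $\varepsilon_*$ is independent of $(p,\tau)$, as claimed.

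\textbf{Main obstacle.} The step I expect to require care is the verification that $EV$ is well defined and continuous. In the symplectic case this is transparent: conjugation by an oriented isomorphism sends a linear complex structure to a linear complex structure, so $\sigma_*(\J_q)\in\SA_l(\R^{2n})$, and continuity follows from the local triviality of $TV$ and of $\mathrm{Fr}^{+}(V)$. In the contact case one must check that an adapted oriented frame carries the co--oriented hyperplane $\xi_q$ together with its positive complex structure $\J_q$ to a genuine point of $\SA_l(\R^{2n+1})$; this is precisely where the positivity condition in the definition of $\SA(M)$ and the orientation--preservation of $\sigma$ enter, and it explains why the structure group of $\mathrm{Fr}^{+}(X)$ is taken to be the connected group above. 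If one instead prefers the orthogonal model $\SA_l(\R^N)\cong SO(N)/U(\lfloor N/2\rfloor)$, one fixes a metric and replaces $\GL^+(N,\R)$ by the still connected group $SO(N)$; the argument is unchanged. Once continuity of $EV$ is granted, the homotopy argument above is formal.
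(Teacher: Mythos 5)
Your proposal is correct and follows essentially the same route as the paper: the paper's proof likewise observes that the (oriented) frame bundle is connected, so a path between the two choices $(p,\tau_p)$ and $(p',\tau_{p'})$ induces a homotopy between the evaluation maps $ev_{(p,\tau_p)}$ and $ev_{(p',\tau_{p'})}$, whence $\varepsilon_*$ is unchanged. Your write-up merely makes explicit the universal evaluation map over $\mathrm{Fr}^{+}(X)$ and the continuity/adaptedness checks that the paper leaves implicit.
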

\begin{proof}
Note that the bundle of frames is connected. Then a path between two choices $(p,\tau_p)$ and $(q,\tau_q)$ induces a homotopy between the maps $ev_{(p,\tau_p)}$ and $ev_{(q,\tau_q)}$, hence $\varepsilon_*$ is independent of the choice of $(p,\tau)$.
\end{proof}
\section{Applications}\label{sec:app}
\subsection{Small dual varieties}\label{ssec:small} In this subsection
we exhibit examples of symplectic manifolds with loops of
symplectomorphism whose B-type Maslov index $\mu_1$ is non--zero. Theorem \ref{thm:cohom} can be used for this purpose. Elements in the homotopy groups of the group of symplectomorphisms can be produced using complex vector bundles or more generally holomorphic fibrations. The underlying symplectic topology of the fibres of a holomorphic fibration remains invariant whereas their complex tends to vary. See \cite{Kr,Se} for examples. We will also use a construction from algebraic geometry.

A polarized projective manifold $X\subset\P^N$ is said to have small
dual if its dual variety $X^*$ has complex codimension at least 2 in
$(\P^N)^*$. The symplectic topology of small dual projective manifold
is studied in \cite{BJ}. It is a classical result that projective
manifold with a small dual of $\codim(X^*)=d+1$ is $\P^d$--ruled. In
particular, it is simply connected; see \cite{Ei}. It is also easy to
prove that there exists a Lefschetz pencil $(X,B,\pi)$ with simply
connected fibers and no critical points. The existence of a Lefschetz
pencil allows us to construct a symplectic fibration over $\P^1$ and
obtain a loop of symplectomorphisms as the monodromy of such
fibration.

\begin{proposition}\label{prop:lefschetz}
Let $(X^{2n},\omega)$ be a polarized projective manifold with small
dual, $n\geq3$. Consider a non--critical Lefschetz pencil
$(X,B,\pi)$. Then the induced blow--up fibration $(\tilde X_B,\pi)$
with fiber the hyperplane section $W=X\cap H$ has as monodromy loop
$\gamma$ a loop of symplectomorphisms with $\mu_1(\gamma)=-1 \pmod {N_1}$.
\end{proposition}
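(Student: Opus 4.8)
The plan is to apply Theorem \ref{thm:cohom} with $k=1$ to the monodromy loop $\gamma\in\pi_1(\Symp(W,\omega|_W))$ of the fibration, where $W=X\cap H$ is a smooth member of the pencil. First I would identify the associated symplectic fibration $X_\gamma\to\S^2$ appearing in Theorem \ref{thm:cohom} with the blow-up fibration $\pi:\tilde X_B\to\P^1$. Since the pencil is non-critical, $\pi$ is a proper submersion, hence a locally trivial symplectic fibration over $\P^1=\S^2$ whose clutching map is precisely the monodromy $\gamma$. Under this identification the vertical bundle $\V X_\gamma=\ker d\pi$ is the relative tangent bundle $T_{\tilde X_B/\P^1}$, a genuine complex bundle of rank $n-1$ exactly because $\pi$ has no critical points. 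The fibre $W$ is simply connected (the pencil has simply connected fibres, as recalled above), so $ev_p\circ\gamma$ is automatically null-homotopic and $[\gamma]\in\ker(\pi_1(ev_p))$; the B-type index is therefore defined and equals $\int_{\S^2}c_1(\s^*\V X_\gamma)\pmod{N_1}$ for any section $\s\in\Gamma(\tilde X_B)$.

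The next step is to produce such a section geometrically. Let $\rho:\tilde X_B\to X$ be the blow-down and $E=\rho^{-1}(B)=\P(N_{B/X})$ the exceptional divisor, a $\P^1$-bundle over $B$ since $\codim_X B=2$. Each point $b\in B$ lies on every member of the pencil, so in local coordinates in which the pencil reads $[f_0:f_1]$ with $B=\{f_0=f_1=0\}$, the map $\pi$ restricts to an identification $E_b=\P(N_{B/X,b})\stackrel{\sim}{\longrightarrow}\P^1$; its inverse is the desired section $\s$, whose image is the fibre $E_b$ of the exceptional divisor.

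Then I would compute the degree. From the relative tangent sequence $0\to T_{\tilde X_B/\P^1}\to T_{\tilde X_B}\to\pi^*T_{\P^1}\to 0$, valid everywhere because $\pi$ is a submersion, together with $\pi\circ\s=\Id$, one obtains
\[
\int_{\S^2}c_1(\s^*\V X_\gamma)=\int_{E_b}c_1(T_{\tilde X_B})-\int_{\P^1}c_1(T_{\P^1})=\int_{E_b}c_1(T_{\tilde X_B})-2.
\]
The blow-up formula $K_{\tilde X_B}=\rho^*K_X+E$ gives $c_1(T_{\tilde X_B})=\rho^*c_1(TX)-[E]$. Restricted to $E_b$ the first term vanishes, since $\rho|_{E_b}$ is constant, while $[E]|_{E_b}=c_1(\O_{\tilde X_B}(E))|_{E_b}=c_1(\O_{\P^1}(-1))$ integrates to $-1$. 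Hence $\int_{E_b}c_1(T_{\tilde X_B})=1$ and $\mu_1(\gamma)=1-2=-1\pmod{N_1}$.

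The main obstacle I anticipate is the bookkeeping in the first two steps rather than any single hard estimate: one must check carefully that the clutching construction really presents $\tilde X_B$ as $X_\gamma$, and that the fibrewise curve $E_b$ is a global section transverse to $\pi$. The decisive quantitative input is the normal-bundle identity $\O_{\tilde X_B}(E)|_{E_b}\cong\O_{\P^1}(-1)$, standard but sign-sensitive, since it is precisely what pins the index to $-1$ rather than $+1$.
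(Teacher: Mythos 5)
Your proposal is correct and follows the same strategy as the paper's proof: both apply Theorem \ref{thm:cohom} to the blow-up fibration, both use simple connectivity of the fibre $W$ (Lefschetz hyperplane theorem, $n\geq 3$) to place $[\gamma]$ in $\ker(\pi_1(ev_p))$, and both use the very same section, namely the inverse of $\pi$ restricted to the exceptional fibre $E_b=\P(N_{B/X,b})\cong\P^1$ over a point $b$ of the base locus (the paper's $s_b$). The only divergence is in how the Chern number is evaluated: the paper restricts the vertical bundle to the exceptional divisor $\widetilde B$ and splits it as $\pi_B^*(TB)\oplus\nu_{\widetilde B}$, so that along the section the first summand pulls back trivially and the second restricts to $\O(-1)$, giving $-1$ directly; you instead run the relative tangent sequence $0\to\V\to T\tilde X_B\to\pi^*T\P^1\to 0$ together with the blow-up canonical formula $K_{\tilde X_B}=\rho^*K_X+E$, arriving at $1-2=-1$. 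Your route is slightly longer but more self-contained, since it avoids having to justify the direct-sum description of the vertical bundle over the exceptional divisor (which the paper asserts rather tersely), while the paper's splitting is shorter and more geometric; both computations hinge on exactly the same quantitative input, $\O_{\tilde X_B}(E)|_{E_b}\cong\O_{\P^1}(-1)$.
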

\begin{proof}
Fix a non--critical Lefschetz pencil $(X,B,\pi)$ that necessarily has $B\neq\emptyset$. The fibres of $\pi$ are given by hyperplane sections of $X$. Consider the blow--up $\widetilde{X}_B$ of $X$ along $B$. It has two projections
$$X\stackrel{\sigma}{\longleftarrow}\widetilde{X}_B\stackrel{\pi}{\longrightarrow}\P^1.$$
Observe that $\widetilde{B}=\sigma^{-1}(B) \stackrel{\pi_B}{\to} B$ is a $\P^1$--bundle over $B$. Indeed the fiber over $b\in B$ is $\P(\nu_{B,b})\cong\P(\C^2)$, where $\nu_B$ is the normal bundle of $B$ in $X$. Hence the choice of a point $b\in B$ in the base locus provides a section $s_b$ of $\pi$. We shall compute the Maslov index of the monodromy loop $\gamma$ using such a section and Theorem \ref{thm:cohom}.

\noindent Consider a point $q\in\widetilde{B}$. The tangent space of $W=\pi^{-1}(\pi(q))$ at that point splits as
$$T_q\widetilde{X}_B\cong T_q\widetilde{B}\oplus \nu_{\widetilde{B},q}.$$
This establishes a bundle isomorphism with basis $\widetilde{B}$. We can further decompose, over $\widetilde{B}$, as
$$(T\widetilde{X}_B)_{|\widetilde{B}}\cong (\ker (\pi_B)_*)\oplus \pi_B^*(TB) \oplus \nu_{\widetilde{B}}.$$
It is clear that $V(\widetilde{X}_B)_{|\widetilde{B}} \simeq \pi_B^*(TB) \oplus \nu_{\widetilde{B}}$ and $(\nu_{W})_{|\widetilde{B}}=  \ker (\pi_B)_*$. In particular, it restricts to a bundle in the image of the section $s_{\sigma(q)}:\P^1\longrightarrow \widetilde{B} \subset \widetilde{X}_B$. Since $\sigma(s_{\sigma(q)})=q$, the bundle $\pi_B^*(TB)$ along $\im(s_{\sigma(q)})$ pulls--back via $s_{\sigma(q)}$ to the trivial bundle over $\P^1$. The normal bundle $\nu_{\widetilde{B}}$ restricts to $\O(-1)$ over the section. Hence
$$\mu_1(\gamma)\equiv\langle c_1(V(\widetilde{X}_B)),[\P^1]\rangle\equiv \langle c_1(\O(-1)),[\P^1]\rangle\equiv-1.$$
\end{proof}

Since $X$ is simply connected, $W=X\cap H$ is also simply connected
for $n \geq 3$ by the Lefschetz hyperplane theorem


\subsection{Standard contact spheres} In this Subsection we prove Theorem \ref{thm:sph} stating that the inclusion $j:U(n)\longrightarrow\Cont(\S^{2n-1},\xi_0)$ induces an injection in the homotopy groups $\pi_i$ for $i=1,\ldots,2n-1$.


Consider standard Euclidean complex space $(\C^n,i)$ and its real unit sphere $\S^{2n-1}=\{z\in\C^n:|z|^2=1\}$. The contact structure $\xi_0$ is the complex $(n-1)$--distribution $\xi_0=T\S^{2n-1}\cap i(T\S^{2n-1})$. A unitary linear operator $A:\C^n\longrightarrow\C^n$ commutes with the complex structure $i$ and thus preserves the distribution $\xi_0$, i.e. $A_*(\xi_0(z))=\xi_0(Az)$. Hence $U(n)$ includes into $\Cont(\S^{2n-1},\xi_0)$.

Let us compute the Maslov indices $\mu_k$ on the subgroups 
$$
\pi_*(j)(\pi_{2k-1}(U(n)))\subset\pi_{2k-1}(\Cont(\S^{2n-1}),\xi_0).
$$
This will prove the non--triviality of the image of the generators of $\pi_{2k-1}(U(n))$ in the contactomorphism group for $k=1,\ldots,n$. In this range the homotopy of $U(n)$ is simple:
\begin{lemma}$($\cite{Bo}$)$\label{lem:htpyunitary}
Let $k,n\in\N$ with $k\leq 2n-1$. Then the following isomorphisms hold
$$\pi_kU(n)\cong\Z\mbox{ if $k$ is odd, }\quad\pi_kU(n)\cong\{0\}\mbox{ if $k$ is even}.$$
\end{lemma}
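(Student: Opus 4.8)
The plan is to reduce the computation to the stable homotopy groups of the infinite unitary group $U=\varinjlim_m U(m)$ and then invoke Bott periodicity. The two ingredients are the Serre fibrations (\ref{eq:unitary}) relating consecutive unitary groups, which already appear in the excerpt, and the Bott periodicity theorem \cite{Bo}. The odd-degree part of the statement essentially restates Proposition \ref{prop:htpyU(n)} (whose range $2k-1\leq 2n-3$ I would push up to $2n-1$ by the same fibration argument), while the even-degree vanishing is the genuinely new content supplied by periodicity.

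First I would exploit the fibration $U(m)\longrightarrow U(m+1)\longrightarrow\S^{2m+1}$, a reindexing of (\ref{eq:unitary}). The long exact sequence in homotopy reads
$$\pi_{k+1}(\S^{2m+1})\longrightarrow\pi_k(U(m))\longrightarrow\pi_k(U(m+1))\longrightarrow\pi_k(\S^{2m+1}),$$
and since $\pi_j(\S^{2m+1})=0$ for $1\leq j\leq 2m$, both flanking groups vanish precisely when $k\leq 2m-1$. Thus the inclusion $U(m)\hookrightarrow U(m+1)$ induces an isomorphism on $\pi_k$ for $k\leq 2m-1$. Iterating over $m=n,n+1,\ldots$, every inclusion in the resulting tower is an isomorphism on $\pi_k$ as long as $k\leq 2n-1$, so that
$$\pi_k(U(n))\cong\pi_k(U)\quad\text{for } k\leq 2n-1.$$

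Next I would read off the stable groups from Bott periodicity. The periodicity isomorphism $\pi_k(U)\cong\pi_{k+2}(U)$, together with the base values $\pi_0(U)=0$ (connectedness) and $\pi_1(U)=\Z$ (from $U(1)=\S^1$), and the classical vanishing $\pi_2$ of a Lie group, gives $\pi_k(U)\cong\Z$ for $k$ odd and $\pi_k(U)\cong 0$ for $k$ even. Substituting into the displayed stabilization isomorphism yields exactly the claimed values of $\pi_k U(n)$ throughout the range $k\leq 2n-1$.

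The only delicate point is the top degree $k=2n-1$, which sits at the very edge of the stable range. There one must confirm that $\pi_{2n-1}(U(n))\cong\pi_{2n-1}(U)$ still holds: this is exactly the borderline case $k=2m-1$ of the fibration argument at the first stage $m=n$, and it succeeds because at that stage the relevant sphere groups $\pi_{2n-1}(\S^{2n+1})$ and $\pi_{2n}(\S^{2n+1})$ both vanish, with no positive stable stem intervening; the later stages $m>n$ are then comfortably interior. For $k\leq 2n-2$ the argument is routine, so the whole difficulty is concentrated in verifying that this edge degree is included rather than merely giving a surjection.
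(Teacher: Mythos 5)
Your proof is correct and is essentially the argument the paper has in mind: the lemma is stated with only the citation to Bott \cite{Bo}, and your derivation (stabilization via the fibration (\ref{eq:unitary}) to reach $\pi_k(U)$, then Bott periodicity anchored at $\pi_0$ and $\pi_1$) is the standard content of that citation, matching the fibration argument the paper itself uses for Proposition \ref{prop:htpyU(n)}. Your care with the edge degree $k=2n-1$ is warranted and handled correctly, since $\pi_{2n-1}(\S^{2n+1})$ and $\pi_{2n}(\S^{2n+1})$ both vanish.
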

This dichotomy is the geometric reason for which the indices $\mu_k$ are only defined in the odd homotopy groups (or algebraically, the reason for which the Chern classes only belong to the even dimensional cohomology). Theorem \ref{thm:sph} will follow from the right understanding of Lemma \ref{lem:htpyunitary}. Let us provide an appropriate geometric description of the homotopy groups $\pi_{2k-1}(U(n))$.

The manifold $\S^{2n-1}\setminus\{q\}\cong\D^{2n-1}$ is contractible and then (\ref{eq:unitary}) implies that
\begin{equation}\label{eq:trivialfib}
p^n:(p^n)^{-1}(\S^{2n-1}\setminus\{q\})\longrightarrow \S^{2n-1}\setminus\{q\}
\end{equation}
is a trivial $U(n-1)$--fibration over a disk $\D^{2n-1}$. It can be trivialized with a unitary connection and a contraction of $\D^{2n-1}$ to a point. For instance, the height function on $\S^{2n-1}$ is a Morse function whose gradient flow can be lifted to $U(n)\longrightarrow\S^{2n-1}$ (or we can also consider the height function as a Morse--Bott function on $U(n)$) thus providing an isomorphism of (\ref{eq:trivialfib}) with $U(n-1)\times\D^{2n-1}$. This is a geometric interpretation of Proposition \ref{prop:htpyU(n)} and it is implicitly used in the proof of Theorem \ref{thm:sph}.

{\it Proof of Theorem \ref{thm:sph}}: Consider the coordinates
$(z_1,\ldots,z_n,\overline{z}_1,\ldots,\overline{z}_n)\in\C^n\setminus\{0\}$
and the unitary frame $\lambda=\{e_1,\ldots,e_n\}$ associated to the
vectors
$\{z_1\partial_{z_1}+\overline{z}_1\partial_{\overline{z}_1},\ldots,z_n\partial_{z_n}+\overline{z}_n\partial_{\overline{z}_n}\}$. The
contact structure $\xi_0\longrightarrow\S^{2n-1}$ can be described as
the kernel of the $1$--form 
$$
\alpha=\frac{i}{2}\cdot\sum_{i=1}^nz_id\overline{z}_i-\overline{z}_idz_i.
$$ 
Let us compute the image $\mu_k$ of an element $[s]\in\pi_{2k-1}(U(n))$ using this frame $\lambda$.

Fix a point $p\in\S^{2n-1}$ and let $\sigma\in\S^{2k-1}$. In order to determine the frame $\{f_1,\ldots,f_n\}$ associated to the sphere of contactomorphisms $s$ we have to push--forward the frame $\lambda(p)$ for each $s(\sigma)$. In this case any of the maps $s(\sigma)$ can be considered as a linear map on $\C^n$ and thus represents its own differential. The frames $s(\sigma)_*\lambda$ can be represented by unitary matrices with respect to the frame $\lambda$ and the homotopy that carries $\{e_1,\ldots,e_{n-k}\}$ to $\{f_1,\ldots,f_{n-k}\}$ in the construction for Definition \ref{def:sympMaslovI} can be considered as a path of matrices. Instead of deforming the frame $\{e_1,\ldots,e_{n-k},e_{n-k+1}\}$ to $\{f_1,\ldots,f_{n-k},v\}$ as in Lemma \ref{lem:htpyframe} we will deform $s(\sigma)_*\lambda=\{f_1,\ldots,f_{n-k}\}$ to $\{e_1,\ldots,e_{n-k}\}$. In the linear algebra viewpoint, this is deforming the matrices $s(\sigma)_*\lambda$ to matrices of the form
$$\left(
\begin{array}{ccccc}
I_{n-k} & A_1 \\
0 & A_2\\
\end{array}
\right).$$
Then the homotopy class of the section $v$ in Lemma \ref{lem:htpyframe} is represented by any of the columns of the matrix $(A_1|A_2)^t$. Hence $\mu_k$ is precisely the composition of the chain of isomorphisms in Proposition \ref{prop:htpyU(n)} and we conclude the statement of Theorem \ref{thm:sph}.\hfill$\Box$

In particular, Theorem \ref{thm:sph} follows. Theorem \ref{thm:proj} is also deduced with this same method, and we leave the details to the reader.

\begin{remark}
  The proof of Theorem \ref{thm:sph} hinges on the fact $\mu_k\neq 0$
  for the A-type indices on $\S^{2n-1}$ in the stable range $k\leq n$;
  for $\mu_k([s])=1$ where $[s]$ is the image of the generator of
  $\pi_{2k-1}(U(n))$. In a similar vein, the essence of the proof of
  Theorem \ref{thm:proj} is the fact that $\mu_k\neq 0\pmod {N_k}$ for
  the $A$-type indices on $\CP^{n}$ as long as $k$ is again in the
  stable range. This is also proved by evaluating $\mu_k$ on the
  generator of $\pi_{2k-1}(\P U(n))$.
\end{remark}

\subsection{Two classes in $\SC(\S^1\times\S^2,\xi_{st})$}\label{ssec:S1S2} In this subsection we compute the contact homogeneous indices for a pair of elements in the second homotopy group of the space of contact structures isotopic to the standard contact structure in $\S^1\times\S^2$. Consider the smooth manifold $\S^1\times\S^2$ with coordinates $(\theta;x,y,z)\in\S^1\times\R^3$ and its unique tight contact structure $\xi$. See \cite{Ge} for details. A possible contact form is
$$\alpha=zd\theta+xdy-ydx.$$
We introduce two classes $[S],[\delta]\in\pi_2\SC(\S^1\times\S^2,\xi)$ and use the contact homogeneous index to prove they are distinct. We begin with $[\delta]$.

\noindent Consider polar coordinates $(\phi,\varphi)\in[0,\pi]\times[0,2\pi]$ in $\S^2$. A point $(\phi,\varphi)\in\S^2$ determines an axis $l(\phi,\varphi)$ in $\R^3$. Denote by $R^\sigma_{\phi,\varphi}\in SO(3)$ the rotation of angle $\sigma$ along the axis $l(\phi,\varphi)$. The sphere $\delta$ is not directly defined in $\SC(\S^1\times\S^2,\xi)$. We first define the map
$$\delta:\S^2\longrightarrow\Diff(\S^1\times\S^2),\quad (\phi,\varphi)\longmapsto\delta(\phi,\varphi)(\theta;x,y,z)=(\theta; R^\theta_{\phi,\varphi}(x,y,z)).$$
It is known that $\Diff(\S^1\times\S^2)\simeq SO(2)\times SO(3)\times\Omega SO(3)$. This is proven in \cite{Ha}. In particular
$$[\delta]\in\pi_2\Diff(\S^1\times\S^2)\cong\pi_2(\Omega SO(3))\cong\pi_3SO(3)\cong\Z$$
is the generator of its second homotopy group. Since $\Diff(\S^1\times\S^2)$ acts on $\SC(\S^1\times\S^2,\xi)$ by push--forward, $\delta$ induces a map
$$\S^2\longrightarrow\SC(\S^1\times\S^2,\xi),\quad (\phi,\varphi)\longmapsto\delta(\phi,\varphi)_*\xi.$$
Denote this map also as $\delta$. Its homotopy class yields a first element $[\delta]\in\pi_2(\SC(\S^1\times\S^2,\xi))$.\\

\noindent The second sphere $S$ is defined directly on the space of contact structures. It is a linear contact sphere in the sense of \cite{CP,GG2}. Consider the following three $1$--forms
$$\alpha_0=zd\theta+xdy-ydx,\quad \alpha_1=xd\theta+ydz-zdy,\quad \alpha_2=yd\theta+zdx-xdz$$
and the standard embedding $(e_0,e_1,e_2):\S^2\longrightarrow\R^3$. Define the map
$$S:\S^2\longrightarrow\SC(\S^1\times\S^2,\xi),\quad (\phi,\varphi)\longmapsto e_0(\phi,\varphi)\alpha_0+e_1(\phi,\varphi)\alpha_1+e_2(\phi,\varphi)\alpha_2.$$
We can now prove the following result.
\begin{proposition}\label{prop:S1S2}
$[S]\neq[\delta]$ in $\pi_2(\SC(\S^1\times\S^2,\xi))$ and the order of $[S]$ is infinite.
\end{proposition}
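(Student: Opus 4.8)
The plan is to compute the homogeneous index $\varepsilon_2$ of Definition \ref{def:homind} on both classes $[S],[\delta]\in\pi_2(\SC(\S^1\times\S^2,\xi))$ and show that it distinguishes them. Recall that $\varepsilon_2$ lands in $\pi_2(\SA_l(\R^3))$; by Lemma \ref{lem:iso} we have $\SA_l(\R^3)\cong\SA_l(\R^4)\cong SO(4)/U(2)\cong\S^2$, so the target of the index is $\pi_2(\S^2)\cong\Z$, an integer-valued invariant. The strategy is thus: lift each sphere of contact structures to a sphere in $\SA\SC$ via the homotopy inverse $\i$, project to $\SA(M)$ via $\rho$, evaluate the resulting family of fiberwise complex structures at a fixed point $(p,\tau)$, and read off the degree of the induced map $\S^2\to\SA_l(\R^3)\cong\S^2$.

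First I would treat $[\delta]$. Since $\delta$ arises from an honest family of \emph{diffeomorphisms} acting on the fixed contact structure $\xi$, the associated family of complex structures on $\xi$ is simply the pushforward of a single compatible $\J$ by the rotations $R^\theta_{\phi,\varphi}$. Evaluating at the fixed point $(p,\tau)=(\theta_0;x_0,y_0,z_0)$, the map $\S^2\to\SA_l(\R^3)$ factors through the orbit of a point under $SO(3)$, and one tracks how the second homotopy class of $\delta\in\pi_2(\Omega SO(3))\cong\pi_3 SO(3)\cong\Z$ transfers through the evaluation. The point is that this construction is pulled back from the $\Diff_0$-action, so the index $\varepsilon_2([\delta])$ is governed by the image of the generator of $\pi_3 SO(3)$ under the composite to $\pi_2(\S^2)$; I expect this to be $0$, reflecting that $\delta$ comes from the contractible-in-$\SA$ diffeomorphism direction rather than from a genuine deformation of the almost complex structure.

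Next I would treat $[S]$, the linear contact sphere. Here the three forms $\alpha_0,\alpha_1,\alpha_2$ furnish, at each point of $\S^2$, an explicit contact form, and the compatible complex structure on the hyperplane $\ker\alpha$ varies as the defining form rotates among the $\alpha_i$. Evaluating at $(p,\tau)$ gives a concrete map $\S^2\to\SA_l(\R^3)\cong\S^2$ whose degree I would compute directly from the quaternionic or $SO(3)$-equivariant structure of the linear contact sphere (this is the sense in which it is ``linear'' in \cite{CP,GG2}). The expected outcome is that $\varepsilon_2([S])=\pm1$, so that $[S]$ has infinite order and, in particular, $[S]\neq[\delta]$.

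The main obstacle will be the explicit degree computation for $[S]$: one must verify that the evaluation map is well defined (independent of the point and frame, which is Lemma \ref{lem:sympinvar}, so this is free) and then carefully compute the degree of the resulting self-map of $\S^2$. The delicate step is identifying the compatible complex structure associated to the convex combination $e_0\alpha_0+e_1\alpha_1+e_2\alpha_2$ and checking that as $(\phi,\varphi)$ sweeps $\S^2$ the induced point in $\SA_l(\R^3)\cong\S^2$ sweeps the target with degree one; this is where the $SO(3)$-symmetry of the linear contact sphere must be exploited to avoid a brute-force Jacobian calculation. Once the two indices are shown to be $0$ and $\pm1$ respectively, the nonequality and the infinite order of $[S]$ follow immediately, since $\varepsilon_2$ is a homomorphism $\pi_2(\SC(\S^1\times\S^2,\xi))\to\Z$.
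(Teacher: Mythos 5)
Your overall strategy is exactly the paper's: distinguish the two classes by computing the homogeneous index $\varepsilon_*$ of Definition \ref{def:homind}, whose target is $\pi_2(\SA_l(\R^3))\cong\pi_2(\S^2)\cong\Z$ (the paper identifies $\SA_l(\R^3)$ with $\S^2$ via the Gauss map on cooriented $2$--planes rather than via Lemma \ref{lem:iso}, but that is cosmetic), and the claimed values $\varepsilon_*([S])=\pm1$, $\varepsilon_*([\delta])=0$ are the ones the paper obtains. The $[S]$ computation you defer is indeed immediate once set up: at $p=(0;0,0,1)$ one has $\alpha_0=d\theta$, $\alpha_1=-dy$, $\alpha_2=dx$, so $\ker(e_0\alpha_0+e_1\alpha_1+e_2\alpha_2)$ is the plane normal to $(e_0,e_2,-e_1)$, and the evaluation map is a rotation of the identity map of $\S^2$, hence of degree one; your $SO(3)$--equivariance plan would recover this.

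The genuine gap is the claim $\varepsilon_*([\delta])=0$, for which you offer only an expectation: that classes ``coming from the diffeomorphism direction'' should have vanishing index. As stated this is unjustified. For a general sphere $\varphi_s\in\Diff_0$, the evaluation of $(\varphi_s)_*\xi$ at $p$ involves both $T\varphi_s$ and the plane $\xi(\varphi_s^{-1}(p))$ at a \emph{moving} point, and nothing soft forces the resulting map $\S^2\to\SA_l(\R^3)$ to be null--homotopic; your proposed ``composite $\pi_3 SO(3)\to\pi_2(\S^2)$'' is a homomorphism $\Z\to\Z$ with no dimensional reason to vanish. The missing idea --- which is the actual content of the paper's proof --- is the choice of evaluation point, legitimate by Lemma \ref{lem:sympinvar}: take $p=(0;0,0,1)$ in the slice $\{\theta=0\}$. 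Since $R^0_{\phi,\varphi}=\Id$ for every $(\phi,\varphi)$, each $\delta(\phi,\varphi)$ fixes $p$ and $T_p\delta(\phi,\varphi)$ acts as the identity on $\xi(p)=\langle\partial_x,\partial_y\rangle$, so the evaluation map is \emph{constant} and $\varepsilon_*([\delta])=0$. (Equivalently, and slightly more robustly: once every map in the family fixes $p$, the evaluation factors through a map $\S^2\to GL^+(T_p M)$ followed by the orbit map to $\SA_l(T_pM)$, and $\pi_2$ of a Lie group vanishes.) Without some such argument pinned to a good choice of $p$, your computation of $\varepsilon_*([\delta])$ does not go through, and with it the proposal becomes the paper's proof.
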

\begin{proof}
Let us compute the contact homogeneous indices for $[S]$ and $[\delta]$ and the statement of Proposition \ref{prop:S1S2} will follow.

Endow $\R^3$ with the flat metric. The space $\SA_l(\R^3)$ is just the space of cooriented $2$--planes in $\R^3$ and the isomorphism $\SA_l(\R^3)\cong\S^2$ is given by the Gauss map. The image $\varepsilon_*[S]$ is the standard embedding $(e_0,e_1,e_2)$ representing the identity in $\pi_2\S^2$. Hence $\varepsilon_*([S])=1\in\pi_2\S^2$ and the order of $[S]$ is infinite. We now compute $\varepsilon_*[\delta]$.

\noindent The choice of point and frame $(p,\tau)$ is $p=(0;0,0,1)$ and $\tau:T_p(\S^1\times\S^2)\longrightarrow\R^3$ is the oriented frame given by the basis $\langle\partial_\theta,\partial_x,\partial_z\rangle$. In particular $\xi(p)=\xi(0;0,0,1)=\ker d\theta=\langle\partial_x,\partial_y\rangle$. In order to compute the contact homogeneous index we need the images
$$\delta(\phi,\varphi)_*\xi(p)=\langle T_p\delta(\phi,\varphi)\partial_x,T_p\delta(\phi,\varphi)\partial_x\rangle$$
It is readily seen that
$$T_p\delta(\phi,\varphi)=
\left(
\begin{array}{cc}
1 & \vec{0}\\
\vec{a}^t & R^\theta_{\phi,\varphi}
\end{array}
\right)$$
for certain vector $\vec{a}$ depending on $(\theta;\phi,\varphi)$. In
particular at $p=(0;0,0,1)$, this yields $R^0_{\phi,\varphi}=Id$ and thus $T_p\delta(\phi,\varphi)\partial_x=\partial_x$ and $T_p\delta(\phi,\varphi)\partial_y=\partial_y$. The image $\delta(\phi,\varphi)_*\xi(p)$ is constant and we conclude $$\varepsilon_*([\delta])=0\in\pi_2\S^2.$$
The contact homogeneous indices of $[S]$ and $[\delta]$ differ and thus both spheres are not homotopic.\end{proof}
Proposition \ref{prop:S1S2},  combined with the construction described in \cite{CP}, has a simple corollary.
\begin{corollary}\label{cor:S1S2}
There exists a positive non--contractible loop in $\pi_1\Cont(\S^1\times\S^2,\xi)$.
\end{corollary}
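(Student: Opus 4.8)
The plan is to produce the loop as the image of $[S]$ under the connecting homomorphism of the fibration of Lemma \ref{lem:serre}, using the homogeneous index computed in Proposition \ref{prop:S1S2} to guarantee non--contractibility, and the linear contact sphere construction of \cite{CP} to guarantee positivity. Lemma \ref{lem:serre} supplies the Serre fibration
$$\Cont(\S^1\times\S^2,\xi)\longrightarrow\Diff_0(\S^1\times\S^2)\stackrel{\pi}{\longrightarrow}\SC(\S^1\times\S^2,\xi),\quad\pi(\varphi)=\varphi_*\xi,$$
whose long exact sequence contains
$$\pi_2(\Diff_0(\S^1\times\S^2))\stackrel{\pi_*}{\longrightarrow}\pi_2(\SC(\S^1\times\S^2,\xi))\stackrel{\partial}{\longrightarrow}\pi_1(\Cont(\S^1\times\S^2,\xi)).$$
Setting $\gamma:=\partial[S]$, the corollary reduces to two assertions: that $\gamma$ is non--contractible, i.e. $\gamma\neq 0$, equivalently $[S]\notin\ker\partial=\im\pi_*$; and that $\gamma$ is represented by a positive loop.

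First I would prove $\gamma\neq 0$. By \cite{Ha} one has $\Diff(\S^1\times\S^2)\simeq SO(2)\times SO(3)\times\Omega SO(3)$, so that $\pi_2(\Diff_0(\S^1\times\S^2))\cong\pi_2(\Omega SO(3))\cong\pi_3(SO(3))\cong\Z$ is infinite cyclic, generated by the class underlying $[\delta]$. Since $\pi$ sends $\delta(\phi,\varphi)$ to $\delta(\phi,\varphi)_*\xi$, functoriality gives $\pi_*[\delta]=[\delta]$ and hence $\im\pi_*=\Z\cdot[\delta]$. The homogeneous index
$$\varepsilon_*:\pi_2(\SC(\S^1\times\S^2,\xi))\longrightarrow\pi_2(\SA_l(\R^3))\cong\pi_2(\S^2)\cong\Z$$
computed in Proposition \ref{prop:S1S2} satisfies $\varepsilon_*[\delta]=0$ and $\varepsilon_*[S]=1$; it therefore vanishes on $\im\pi_*=\Z\cdot[\delta]$ but not on $[S]$. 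Consequently $[S]\notin\im\pi_*=\ker\partial$, so $\gamma=\partial[S]\neq 0$ and $\gamma$ is a non--contractible loop of contactomorphisms.

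Finally I would obtain positivity from \cite{CP}. The sphere $S$ is a linear contact sphere, and the construction there associates to such a sphere an explicit positive loop of contactomorphisms, the positivity of its generating contact Hamiltonian reflecting the taut structure of the family $e_0\alpha_0+e_1\alpha_1+e_2\alpha_2$ and the fact that the induced Gray--Moser isotopy moves in the positive Reeb direction. The step I expect to be the main obstacle is the identification of this positive loop with $\gamma=\partial[S]$: one must verify that the natural Gray--Moser lift over a hemisphere of $\S^2$ restricts, on the bounding equator, to the loop of \cite{CP}, i.e. that the connecting homomorphism of Lemma \ref{lem:serre} carries $[S]$ precisely to its class in $\pi_1(\Cont(\S^1\times\S^2,\xi))$. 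Granting this compatibility, $\gamma$ is represented by a positive loop which is non--contractible by the previous paragraph, establishing the corollary.
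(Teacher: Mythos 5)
Your proposal is correct and follows essentially the same route as the paper: both take $\gamma=\partial[S]$ in the long exact sequence of the fibration from Lemma \ref{lem:serre}, deduce non--contractibility from Proposition \ref{prop:S1S2} since $\im\pi_*=\ker\partial$ is generated by $[\delta]$ while $\varepsilon_*[S]\neq 0=\varepsilon_*[\delta]$, and invoke the loop--at--infinity construction of \cite{CP} for a positive representative of $\partial[S]$. Your explicit remark that the identification of the \cite{CP} positive loop with $\partial[S]$ is the delicate point is fair, but the paper likewise defers exactly this to \cite{CP}.
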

\begin{proof}
The image of $[S]\in\pi_2\SC(\S^1\times\S^2,\xi)$ via the connecting morphism $\partial$ in the exact sequence
$$\ldots\longrightarrow\pi_2\Diff(\S^1\times\S^2)\stackrel{i}{\longrightarrow}\pi_2\SC(\S^1\times\S^2,\xi)\stackrel{\partial}{\longrightarrow}\pi_1\Cont(\S^1\times\S^2,\xi)\longrightarrow\ldots$$
is an element $\Gamma\in\pi_1\Cont(\S^1\times\S^2,\xi)$. The construction of the loop at infinity provides a positive representative
$$\gamma:\S^1\longrightarrow\Cont(\S^1\times\S^2,\xi),\quad [\gamma]=\Gamma.$$
We refer the reader to \cite{CP} for an explicit construction. Since $\pi_2\Diff(\S^1\times\S^2)\cong\Z$ is generated by the class $[\delta]$, Proposition \ref{prop:S1S2} implies that $[S]\not\in\im i=\ker\partial$. Hence $\gamma$ is a positive non--contractible loop.
\end{proof}
In other words, the group $\Cont(\S^1\times\S^2,\xi)$ is non--orderable, see \cite{CP2, EKP,Gi,GD} for a detailed account. The statement of the Corollary also follows from Proposition 2.1.B in \cite{EP} applied to the loop
$$f:\S^1\longrightarrow \Cont(\S^1\times\S^2,\xi),\quad \sigma\longrightarrow f(\sigma)(\theta;x,y,z)=(\theta;R^\sigma_{\pi,0}(x,y,z)).$$
with non--negative Hamiltonian $H(\theta;x,y,z)=x^2+y^2\geq0$. The construction in Corollary \ref{cor:S1S2} has the advantage of being explicit.


\end{document}